\theoremstyle{plain}
\newtheorem{thm}{Theorem}[section]
\newtheorem{lem}[thm]{Lemma}
\newtheorem{cor}[thm]{Corollary}
\theoremstyle{definition}
\newtheorem{rem}[thm]{Remark}
\newtheorem{dfns-rems}[thm]{Definitions and Remarks}
\newtheorem{notas-rems}[thm]{Notations and Remarks}
\newtheorem{exmps-rems}[thm]{Examples and Remarks}
\begin{document}


\title[On The Double Roman ON bondage]{On The Double Roman bondage numbers of Graphs}
\author[N. Jafari Rad, H.R. Maimani, M. Momeni and F. Rahimi Mahid ]{}

\author[]{N. Jafari Rad}
\address{N. Jafari Rad, Department of Mathematics,
Shahed University, Tehran, Iran}

\email{n.jafarirad@gmail.com}
\author[]{H.R. Maimani}
\address{H.R. Maimani, Mathematics Section, Department of Basic Sciences,
Shahid Rajaee Teacher Training University, P.O. Box 16785-163,
Tehran, Iran}

\email{maimani@ipm.ir}

\author[]{M. Momeni}
\address{M. Momeni,  Mathematics Section, Department of Basic Sciences,
Shahid Rajaee Teacher Training University, P.O. Box 16785-163,
Tehran, Iran}

\email{momeni.mosi@yahoo.com}

\author[]{F. Rahimi Mahid}

\address{F. Rahimi Mahid, Mathematics Section, Department of Basic Sciences,
Shahid Rajaee Teacher Training University, P.O. Box 16785-163,
Tehran, Iran}

\email{farhad.rahimi@sru.ac.ir}

\date{}

\begin{abstract}
 For a graph $G=(V,E)$, a double roman dominating function (DRDF)
is a function $f : V \longrightarrow \{0, 1, 2,3\}$ having the
property that if $f(v)=0$ for some vertex $v$, then $v$ has at
least two neighbors assigned $2$ under $f$ or one neighbor $w$
with $f(w)=3$, and if $f(v)=1$ then $v$ has at least one neighbor
$w$ with $f(w) \geq 2$. The weight of a DRDF $f$ is the sum  $f
(V) =\sum_{u\in V} f (u)$. The minimum weight of a DRDF on a
graph $G$ is the double Roman domination number of $G$ and is
denoted by $\gamma_{dR}(G)$. The double roman bondage number of
$G$, denoted by $b_{dR}(G)$, is the minimum cardinality among all
edge subsets $B \subseteq E(G)$ such that $\gamma_{dR}(G-B) >
\gamma_{dR}(G)$. In this paper we study the double roman
bondage number in graphs. We determine the double roman
bondage number in several families of graphs, and present several bounds for the double roman
bondage number. We also study the complexity issue of the double roman
bondage number and prove that the decision problem for the double roman
bondage number is NP-hard even when restricted to bipartite graphs.
\end{abstract}
\subjclass[2010]{05C69} \keywords{Domination, Double Roman
Domination, Bondage number, Planar graph, 3-SAT, NP-hard.}

\maketitle
\section{Introduction}
Throughout this paper all graphs are finite, simple and
undirected. We denote the vertex set and the edge set of a graph
$G$ by $V=V(G)$ and $E=E(G)$, respectively. \\Let $G = (V, E)$ be
a graph of order $n$. The \textit{open neighborhood} of a vertex
$v\in{V}$ is the set $N(v) = \{u|uv \in{E}\},$ and the
\textit{closed neighborhood} of $v$ is $N[v] = N(v) \cup \{v\}$.
The \textit{degree} of a vertex $v$ is $\deg_G(v)=|N(v)|$. The
\textit{maximum} (respectively, \textit{minimum}) degree among
the vertices of $G$ is denoted by $\Delta(G)$ (respectively,
$\delta(G)$). The open neighborhood of a set $S \subseteq V$ is
$N(S) = \cup_{ v\in{S}} N(v),$ and the closed neighborhood of $S$
is $N[S] = N(S)\cup S = \cup_{v\in{S}} N[v].$ A vertex with
exactly one neighbor is called a \textit{leaf} and its unique
neighbor is a \textit{support vertex}. A \textit{strong support
vertex} is a support vertex adjacent to at least two leaves. The
distance $d_G(x,y)$ (or briefly $d(x,y)$) between vertices $x$
and $y$ of a graph $G$ is the length of shortest path connecting
them. The \textit{girth} $g(G)=g$ of $G$ is the length of a
shortest cycle in $G$, and $g(G)=\infty $ when $G$ is a forest. A
\textit{$k$-partite graph} is a graph which its vertex set can be
partitioned into $k$ sets $V_1, V_2, \cdots, V_k$
 such that every edge of the graph has an end point in $V_i$
and an end point in $V_j$ for some $1\leq i\neq j\leq k$. A
\textit{complete $k$-partite graph} is a $k$-partite graph that
every vertex of each partite set is adjacent to all vertices of
the other partite sets. We denote by $K_{n_1,n_2,\cdots n_k}$ the
complete $k$-partite graph where $|V_i|=n_i$ for $1\leq i\leq k$.
In the case $k=2$, the $k$-partite and complete $k$-partite graph
are called bipartite and complete bipartite graphs. We denote by
$P_n, C_n, K_n$ and $\overline{K_n}$, the path, the cycle, the
complete graph and the empty graph of order $n$, respectively.
For a graph $G$ and a nonempty subset $S\subseteq V(G)$, the
\textit{vertex-induced subgraph}, denoted by $G[S]$,
is the subgraph of $G$ with vertex-set $S$ and edges incident to
members of $S$. For a subset $S$ of vertices, we refer to $G-S$
as the subgraph of $G$ induced by $V(G)\setminus S$. If
$S=\{v\},$ then the subgraph $G-S$ is denoted by $G-v$. For a
nonempty subset $X\subseteq E(G)$, we denote by $G-X$ the
spanning subgraph of $G$ obtained by deleting the edges of $X$
from $G$. If $X=\{e\}$, then we denote $G-X$ by $G-e$. A
\textit{planar graph} is a graph that can be drawn on the plane
in such a way that its edges intersect only at their endpoints. A
connected graph $G$ is called $2$-connected, if for every vertex
$x \in  V (G)$, the graph $G-x$ is connected. The \textit{join} of
two graphs $G$ and $H$,  $G\vee H$, is the graph with vertex-set
$V(G\vee H)=V(G)\cup V(H)$ and edge set $E(G\vee H)=E(G)\cup
E(H)\cup\{uv: u\in V(G), v\in V(H)\}.$ The graph $K_1\vee
C_{n-1}$ is called a wheel and is denoted by $W_n$.

A set $S \subseteq V$ is called a \textit{dominating set} if
$N[S] = V.$ The \textit{domination number}, $\gamma (G)$ of $G$,
is the minimum cardinality of a dominating set in $G$. A
dominating set of $G$ of cardinality $\gamma (G)$ is called a
$\gamma$-set of $G$ or just a $\gamma(G)$-set. For other graph
theory notation and terminology not given here we refer to
\cite{hhs}.

Let $f : V \rightarrow\{0, 1, 2\}$ be a function having the
property that for every vertex $v\in{V}$ with $f(v) = 0,$ there
exists a neighbor $u \in {N(v)}$ with $f(u) = 2.$ Such a function
is called a \textit{Roman dominating function}. The weight of a
Roman dominating function is the sum $f(V) = \sum_{v\in{V}}
f(v).$ The minimum weight of a Roman dominating function on $G$
is called the \textit{Roman domination number} of $G$ and is
denoted by $\gamma_{R}(G)$. A Roman dominating function on $G$ of
weight $\gamma_{R}(G)$ is called a $\gamma_{R}$-function of $G$
or just a $\gamma_R(G)$-function. A Roman dominating function $f$
can be represented as a triple $f=(V_0,V_1,V_2)$ (or
$f=(V_0^f,V_1^f,V_2^f)$), where $V_i=\{v:f(v)=i\}$ for $i=0,1,2$.
The mathematical concept of Roman domination, defined and
discussed by Stewart \cite{s}, and ReVelle and Rosing \cite{rr},
and subsequently developed by Cockayne et al. \cite{cdhh}. Several
variants of Roman domination already have been defined and
studied. For a graph $G=(V,E)$, a \textit{double roman dominating
function} (DRDF) is a function $f : V \longrightarrow \{0, 1,
2,3\}$ having the property that if $f(v)=0$, then $v$ has at
least two neighbors assigned $2$ under $f$ or one neighbor $w$
with $f(w)=3$, and if $f(v)=1$ then $v$ has at least one neighbor
$w$ with $f(w) \geq 2$. The weight of a double Roman dominating
function $f$ is the sum  $f (V) =\sum_{u\in V} f (u)$. The
minimum weight of a DRDF is called \textit{double Roman
domination number} of $G$ and is denoted by $\gamma_{dR}(G)$. The
concept of double roman domination is defined by Beeler,  Haynes
and Hedetniem \cite{bhh}, and further studied in, for example
\cite{acs, jr,v}. Beeler et al. \cite{bhh} observed that in a
DRDF of minimum weight no vertex needs to be assigned the value
$1$. In fact for every DRDF $f : V \longrightarrow \{0,1, 2,
3\}$, there is a DRDF $f' : V \longrightarrow \{0, 2, 3\}$ with
$w(f')\leq w(f)$. Thus, since $\gamma_{dR}(G)$ is the minimum
weight among all double Roman dominating functions on $G$,
without loss of generality, we only consider double Roman
dominating functions with no vertex assigned $1$. We use the
notation $f=(V^f_0,V^f_2,V^f_3)$ for a DRDF $f : V
\longrightarrow \{0, 2, 3\}$, where $V_i^f=\{v:f(v)=i\}$, for
$i=0,2,3$.

Bauer, Harary, Nieminen and Suffel \cite{bhns} introduced the
concept of bondage number in graphs. The \textit{bondage number}
$b(G)$ of a nonempty graph $G$ is the minimum cardinality among
all sets of edges $E'\subseteq E(G)$ for which
$\gamma(G-E')>\gamma(G)$. This concept has been further studied
for various domination variants, see for example, \cite{akq,bhsx,
dhtv,fjkr,hr,jv,x}.

In this paper we consider the concept of bondage number for the
double roman domination number. The \textit{double roman bondage
number} of a graph $G$, denoted by $b_{dR}(G)$, is the minimum
cardinality among all edge subsets $B \subseteq E(G)$ such that
$\gamma_{dR}(G-B) > \gamma_{dR}(G)$. The organization of the
paper is as follows. In Section 2, we present some preliminary
results and determine the double Roman bondage number in some
families of graphs. In Section 3, we present various bounds for
the double Roman bondage number. In Section 4, we study
complexity issue of the double Roman bondage number, and show
that the decision problem of the double Roman bondage number is
NP-hard even when restricted to bipartite graphs. We make use of
the following.

\begin{thm}\cite{acs} \label{12}
Let $G$ be a connected  graph of order $n \geq 3$. Then
\begin{itemize}
\item[1.]
$\gamma_{dR}(G) = 3$ if and only if $\Delta (G) = n -1$.
\item[2.]
$\gamma_{dR}(G) = 4$ if and only if $G = \overline{K_2} \vee H$, where $H$ is a graph with $\Delta (H) \leq |V(H)|-2$.
\item[3.]
$\gamma_{dR}(G) = 5$ if and only if $\Delta (G) = n - 2$ and $G \neq \overline{K_2} \vee H$ for any graph $H$ of order $n-2$.
\end{itemize}
\end{thm}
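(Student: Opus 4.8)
The plan is to exploit the normalization to values in $\{0,2,3\}$ and to enumerate the very few normalized DRDFs of small weight. First I would record the baseline bound $\gamma_{dR}(G)\ge 3$ for every graph of order $n\ge 2$: a normalized DRDF of weight $0$, $1$, or $2$ is impossible, since a lone vertex of value $2$ cannot doubly dominate any neighbor and the values $0,1$ provide no domination. Next I would list, for each target weight $w\in\{3,4,5\}$, all multisets of positive values drawn from $\{2,3\}$ summing to $w$: weight $3$ forces a single vertex of value $3$; weight $4$ forces exactly two vertices of value $2$; weight $5$ forces one vertex of value $3$ and one of value $2$. These three configurations are the only normalized minimum functions that can occur, and translating their validity conditions into adjacency statements is what drives the whole proof.

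For Part 1, the ($\Leftarrow$) direction is immediate: a vertex $v$ with $\deg_G(v)=n-1$ lets us set $f(v)=3$ and $f\equiv 0$ elsewhere, a valid DRDF of weight $3$. For ($\Rightarrow$), a weight-$3$ normalized DRDF is a single vertex $v$ of value $3$ with all others $0$; validity forces every other vertex to be adjacent to $v$, i.e. $\deg_G(v)=n-1$, so $\Delta(G)=n-1$. For Part 2, I would analyze the weight-$4$ configuration $f(u)=f(w)=2$, all else $0$. Since there is no vertex of value $3$, every $0$-vertex must have two neighbors of value $2$, hence must be adjacent to both $u$ and $w$; thus $V\setminus\{u,w\}\subseteq N(u)\cap N(w)$. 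If $u\sim w$ then $\deg_G(u)=n-1$ and Part 1 gives $\gamma_{dR}(G)=3$, a contradiction; so $u\not\sim w$ and $\deg_G(u)=\deg_G(w)=n-2$, which means precisely $G=\overline{K_2}\vee H$ with $H=G[V\setminus\{u,w\}]$. Finally $\gamma_{dR}(G)\ne 3$ forces $\Delta(G)\le n-2$, and since each $x\in V(H)$ has $\deg_G(x)=\deg_H(x)+2$, this is equivalent to $\Delta(H)\le |V(H)|-2$. The converse runs the same computation backwards: such a join admits the weight-$4$ function above, and the degree condition rules out a universal vertex, so $\gamma_{dR}(G)=4$.

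For Part 3 I would first record the key observation that \emph{any} graph of the form $\overline{K_2}\vee H$ satisfies $\gamma_{dR}(G)\le 4$, because assigning $2$ to both vertices of $\overline{K_2}$ doubly dominates every vertex of $H$. Granting this, the ($\Rightarrow$) direction follows: $\gamma_{dR}(G)=5$ gives $\Delta(G)\le n-2$ by Part 1, the weight-$5$ configuration (a $3$-vertex $a$ adjacent to all $0$-vertices) forces $\deg_G(a)=n-2$ so $\Delta(G)=n-2$, and $G$ cannot be any join $\overline{K_2}\vee H$ since that would give $\gamma_{dR}(G)\le 4$. For ($\Leftarrow$), Part 1 gives $\gamma_{dR}(G)\ne 3$ and Part 2 gives $\gamma_{dR}(G)\ne 4$ (as $G$ is not the requisite join), while a vertex of degree $n-2$ yields a weight-$5$ DRDF (put $3$ on it and $2$ on its unique non-neighbor), so $\gamma_{dR}(G)=5$. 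The main obstacle here is conceptual rather than computational: one must notice that the hypothesis has to exclude \emph{every} join $\overline{K_2}\vee H$, not merely those with $\Delta(H)\le|V(H)|-2$, because a join with $\Delta(H)=|V(H)|-1$ produces a universal vertex and hence $\gamma_{dR}(G)=3$ rather than $5$.
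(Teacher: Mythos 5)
Your proof is correct, but there is nothing in the paper to compare it against: Theorem \ref{12} is quoted from \cite{acs} and used as a black box, with no proof supplied in this paper. So your argument should be judged on its own, and it holds up. The normalization to values in $\{0,2,3\}$ (which the paper does state, following Beeler et al.) legitimately reduces each weight $w\in\{3,4,5\}$ to the unique multiset of positive labels summing to $w$, and from there your adjacency translations are all sound: the single $3$ must dominate everything (Part 1); the two $2$'s must be common neighbors of every other vertex and must be non-adjacent, since adjacency would create a universal vertex and drop the weight to $3$ (Part 2); and for Part 3 your key observation that \emph{every} join $\overline{K_2}\vee H$ satisfies $\gamma_{dR}\leq 4$, independent of any degree condition on $H$, is exactly what makes both directions of the equivalence close. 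Your closing remark is the one place where the logic is slightly overstated: under the hypothesis $\Delta(G)=n-2$ of Part 3, a join with $\Delta(H)=|V(H)|-1$ is already excluded (it would force $\Delta(G)=n-1$), so excluding only the joins of Part 2 would give an equivalent statement; the ``for any $H$'' phrasing is needed to make the forward implication literally true as stated, which is what your proof of the ($\Rightarrow$) direction delivers. One small bookkeeping point worth making explicit: a minimum-weight DRDF need not itself avoid the value $1$, but by the cited normalization there exists some minimum-weight DRDF with values in $\{0,2,3\}$, and your case analysis applies to that one, which is all that is required.
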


\begin{thm}\cite{acs}\label{pncn}
Let $n$ be a positive integer. Then
\[
\gamma_{dR}(P_n)=\left\{
    \begin{array}{ll}
      n, & \hbox{if $n\equiv 0 \, (mod \, 3)$}, \\
      n+1, & \hbox{if $n\not\equiv 0 \, (mod \, 3)$.}

    \end{array}
  \right.
\]\\
\[
\gamma_{dR}(C_n)=\left\{
    \begin{array}{ll}
      n+1, & \hbox{if $n\equiv 1, 5 \, (mod\, 6)$,} \\
      n, & \hbox{if $n\not\equiv 1, 5 \, (mod\, 6)$.}

    \end{array}
  \right.
\]
\end{thm}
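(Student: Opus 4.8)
The plan is to prove both formulas by establishing matching upper and lower bounds, where the upper bounds come from explicit double Roman dominating functions and the lower bounds come from a single amortized (discharging) argument that is then sharpened by an equality analysis. Throughout I use the reduction noted just before the theorem: it suffices to consider functions $f=(V_0^f,V_2^f,V_3^f)$ with no vertex of value $1$, and I write the weight as $w(f)=2|V_2^f|+3|V_3^f|$. The decisive structural feature of $P_n$ and $C_n$ is that every vertex has degree at most $2$, which is exactly what makes the bound $\gamma_{dR}\ge n$ available (it fails in general, e.g. for large stars, precisely because a value-$3$ vertex there has too many dependent neighbors).

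For the upper bounds I would exhibit periodic patterns. On a path, repeating the block $(0,3,0)$ gives a valid DRDF of weight $n$ when $3\mid n$ (each value-$3$ vertex dominates both neighbors, and each end-vertex, having value $0$, is adjacent to a value-$3$ vertex). When $n\equiv 1\pmod 3$ I would place $(0,3,0)$ blocks on the first $n-4$ vertices and the block $(0,3,0,2)$ on the last four; when $n\equiv 2\pmod 3$ I would append $(3,0)$ to the $(0,3,0)$ blocks; in both cases the weight is $n+1$. On a cycle, repeating $(0,3,0)$ works when $3\mid n$ (weight $n$), while repeating $(2,0)$ works when $n$ is even (weight $n$, each value-$0$ vertex being flanked by two value-$2$ vertices). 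When $n\equiv 1,5\pmod 6$, i.e. $n$ is odd and $3\nmid n$, I would cover all but one or two vertices with $(0,3,0)$ blocks and assign one additional $2$ (if $n\equiv 1\pmod 6$) or one additional $3$ (if $n\equiv 5\pmod 6$) to the remaining vertices, obtaining weight $n+1$. (The handful of small cases, such as $P_1$ and $P_2$, are checked directly.)

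For the lower bound $\gamma_{dR}(G)\ge n$, valid for any $G$ with $\Delta(G)\le 2$, I would run the following discharging on a minimum DRDF $f$, starting from charge $f(v)$ at each vertex so that the total charge equals $w(f)$. Each value-$3$ vertex keeps $1$ and sends $1$ to each value-$0$ neighbor assigned to it as dominator; each value-$2$ vertex keeps $1$ and sends $\tfrac12$ to each value-$0$ neighbor using it as one of two dominators; each value-$0$ vertex is assigned either a single value-$3$ dominator or a pair of value-$2$ dominators. Since every vertex has at most two neighbors, a value-$3$ vertex sends at most $2$ and a value-$2$ vertex at most $1$, so each nonzero vertex retains charge at least $1$, while each value-$0$ vertex receives charge exactly $1$. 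Hence every vertex ends with charge at least $1$, giving $w(f)\ge n$ and thus $\gamma_{dR}(P_n)=\gamma_{dR}(C_n)=n$ whenever a weight-$n$ tiling exists.

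The main work, and the step I expect to be the real obstacle, is the equality analysis that upgrades this to $\gamma_{dR}\ge n+1$ in the residue classes admitting no perfect tiling. Equality $w(f)=n$ forces every vertex to retain charge exactly $1$, which severely constrains $f$: every value-$3$ and every value-$2$ vertex must be internal with both neighbors equal to $0$ and dependent on it, and every value-$0$ vertex must receive exactly $1$. On a path this forces both end-vertices to have value $0$ with a value-$3$ neighbor, and a left-to-right propagation then shows the whole path must be $(0,3,0)$ repeated, so $3\mid n$; thus $n\not\equiv 0\pmod 3$ rules out weight $n$ and gives $\gamma_{dR}(P_n)\ge n+1$. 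On a cycle the same local constraints show that if any value-$3$ vertex is present the configuration propagates to the all-$(0,3,0)$ pattern (forcing $3\mid n$), whereas if no value-$3$ vertex is present the value-$2$ and value-$0$ vertices must strictly alternate (forcing $2\mid n$); the delicate point is verifying that the propagation closes up consistently around the cycle, so that $3\nmid n$ (resp. $2\nmid n$) genuinely excludes the corresponding configuration. Consequently weight $n$ is attainable only when $2\mid n$ or $3\mid n$, i.e. when $n\not\equiv 1,5\pmod 6$, which together with the constructions of the second paragraph yields the stated value of $\gamma_{dR}(C_n)$.
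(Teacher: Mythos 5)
The paper offers no proof to compare against: Theorem \ref{pncn} is quoted from \cite{acs} as a known result, so your argument has to stand on its own --- and it does. The constructions are valid (the wrap-around conditions in the cycle cases and the terminal blocks $(0,3,0,2)$ and $(3,0)$ for paths all check out), and the discharging argument correctly proves $\gamma_{dR}(G)\geq n$ for every graph with $\Delta(G)\leq 2$: a value-$3$ vertex sends at most $2$, a value-$2$ vertex at most $1$, and each value-$0$ vertex receives exactly $1$ from its assigned dominator(s), so every vertex retains charge at least $1$. The rigidity step you flagged as the real obstacle also goes through. If $w(f)=n$, every vertex retains exactly $1$, so each value-$2$ or value-$3$ vertex is internal with both neighbors of value $0$ and assigned to it; on a path the endpoints are then forced to be $0$ with a value-$3$ neighbor, and the left-to-right propagation works because at each step the next vertex cannot have value $3$ (its value-$0$ predecessor is already assigned to the previous $3$, so the new $3$ would retain charge at least $2$) nor value $2$ (the predecessor has only one value-$2$ neighbor, so it cannot be assigned to a pair), forcing the $(0,3,0)$ tiling and hence $3\mid n$. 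On a cycle the same dichotomy shows a value-$3$ vertex forces the period-$3$ pattern (so $3\mid n$), while its absence forces strict $2,0$ alternation (so $2\mid n$), and consistency of the pattern around the cycle is exactly the divisibility condition; thus weight $n$ is impossible when $n\equiv 1,5 \pmod 6$. Compared with the inductive, block-by-block case analyses typical of the literature, your route buys uniformity: one lower-bound lemma covers $P_n$, $C_n$, and indeed any disjoint union of paths and cycles, with the residue conditions isolated in a single clean equality analysis.
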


\begin{lem}\label{1.3} \cite{v}
Let $1\leq n_1\leq n_2\leq \cdots\leq n_r$ be integers. Then
\[
\gamma_{dR}(K_{n_1,n_2,\cdots,n_r})=\begin{cases}
3 \quad if\ n_1=1,\\
4\quad if \ n_1=2,\\
6\quad  \ n_1\geq3.\\
\end{cases}
\]
\end{lem}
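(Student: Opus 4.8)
The plan is to split into the three cases according to the value of $n_1$ and to reduce each to the classification in Theorem \ref{12}. Write $V_1,\dots,V_r$ for the partite sets with $|V_i|=n_i$ and set $n=n_1+\cdots+n_r$. Since a vertex of $V_i$ is adjacent to exactly the $n-n_i$ vertices lying outside its own part, and $n_1\le\cdots\le n_r$, the maximum degree is $\Delta(G)=n-n_1$. Throughout I assume $r\ge 2$, so that $G$ is connected of order $n\ge 3$ and Theorem \ref{12} applies (the one-part graph is edgeless and is excluded).

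If $n_1=1$, the single vertex of $V_1$ is adjacent to every other vertex, so $\Delta(G)=n-1$ and Theorem \ref{12}(1) gives $\gamma_{dR}(G)=3$ at once. If $n_1=2$, I would observe that $G$ decomposes as a join $\overline{K_2}\vee H$, where $\overline{K_2}$ is the part $V_1$ and $H=K_{n_2,\dots,n_r}$: every vertex of $V_1$ is joined to every vertex of $H$, while the two vertices of $V_1$ are nonadjacent. It then remains only to check the degree hypothesis of Theorem \ref{12}(2): in $H$ a vertex of $V_i$ has degree $|V(H)|-n_i\le |V(H)|-n_2\le |V(H)|-2$, using $n_2\ge n_1=2$. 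Hence $\Delta(H)\le |V(H)|-2$ and Theorem \ref{12}(2) yields $\gamma_{dR}(G)=4$.

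The remaining case $n_1\ge 3$ carries the only real content. For the upper bound I would exhibit an explicit DRDF: choose $u\in V_1$ and $v\in V_2$, set $f(u)=f(v)=3$ and $f\equiv 0$ elsewhere. Every zero-vertex lies in some part $V_i$ and is adjacent to at least one of $u,v$ (to both when $i\ge 3$, and to the one lying in a different part when $i\in\{1,2\}$), so it has a neighbor of value $3$; thus $f$ is a DRDF of weight $6$ and $\gamma_{dR}(G)\le 6$. For the matching lower bound I would rule out the values $3,4,5$ via Theorem \ref{12}. Since $n_1\ge 3$ we have $\Delta(G)=n-n_1\le n-3$, so $\Delta(G)\neq n-1$ and $\Delta(G)\neq n-2$; by parts (1) and (3) this excludes $\gamma_{dR}(G)\in\{3,5\}$. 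Finally, $G$ cannot equal $\overline{K_2}\vee H$ for any $H$, because a vertex of the $\overline{K_2}$ factor would have to be adjacent to all but one vertex, forcing a partite set of size $2$ and contradicting $n_1\ge 3$; hence part (2) excludes $\gamma_{dR}(G)=4$. Combining the two bounds gives $\gamma_{dR}(G)=6$.

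I expect no serious obstacle: once $\Delta(G)=n-n_1$ is recorded, each case is a direct appeal to Theorem \ref{12}, and the only genuine computation is the one-line weight-$6$ DRDF in the last case. The points needing a little care are the identification $G=\overline{K_2}\vee K_{n_2,\dots,n_r}$ when $n_1=2$ together with the implication $n_2\ge 2\Rightarrow\Delta(H)\le|V(H)|-2$, and the structural remark that $G=\overline{K_2}\vee H$ forces a part of size exactly $2$, which is precisely what drives the lower bound when $n_1\ge 3$.
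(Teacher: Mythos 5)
The paper offers no proof of this lemma at all --- it is imported from Volkmann \cite{v} as a known result --- so there is no internal argument to compare yours against; the only question is whether your derivation stands on its own, and it essentially does. Reducing everything to the classification of Theorem \ref{12} is sound: recording $\Delta(K_{n_1,\dots,n_r})=n-n_1$ settles the case $n_1=1$ at once; the identification $K_{2,n_2,\dots,n_r}=\overline{K_2}\vee K_{n_2,\dots,n_r}$ together with $\Delta(H)\le |V(H)|-n_2\le |V(H)|-2$ handles $n_1=2$ (this also works when $r=2$, where $H$ is the edgeless graph); and for $n_1\ge 3$ your weight-$6$ function gives the upper bound while the degree bound $\Delta(G)\le n-3$ kills the values $3$ and $5$ and rules out any join decomposition $\overline{K_2}\vee H$, since either vertex of the $\overline{K_2}$ factor would need degree $n-2>\Delta(G)$, killing the value $4$. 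Two small points deserve patching. First, your claim that $r\ge 2$ forces order $n\ge 3$ fails exactly for $K_{1,1}=K_2$, which the lemma's hypotheses allow; Theorem \ref{12} does not apply there, so you need the one-line direct computation $\gamma_{dR}(K_2)=3$ (weight $2$ cannot dominate the second vertex). Second, in the case $n_1\ge 3$ your conclusion ``values $3,4,5$ are excluded, hence $\gamma_{dR}(G)\ge 6$'' tacitly uses $\gamma_{dR}(G)\ge 3$; this is true for every graph on at least two vertices (a single label $2$, or two labels $1$, cannot legally cover a second vertex), but it is worth a sentence, since Theorem \ref{12} characterizes the graphs attaining $3,4,5$ rather than asserting any lower bound.
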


\begin{thm}\cite{w}\label{9}
If $G$ is a planar graph of girth $g\leq \infty$, then
\[
|E(G)|\leq \frac{g(n(G)-2)}{g-2}.
\]
\end{thm}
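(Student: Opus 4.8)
The natural route is through Euler's polyhedral formula, so the plan is to fix a plane embedding of $G$ and convert a girth-based lower bound on the lengths of faces into the desired upper bound on $|E(G)|$. First I would reduce to the substantive case in which $G$ is connected and contains a cycle, so that $3\le g<\infty$ and Euler's formula applies in its usual form; the acyclic case $g=\infty$ and the disconnected case are degenerate and are to be read with the natural conventions, so I set them aside. Writing $n=n(G)$ and $m=|E(G)|$, and letting $f$ be the number of faces of a fixed plane drawing of the connected graph $G$, Euler's formula gives $n-m+f=2$.

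The core of the argument is a double count of edge--face incidences. Summing the boundary length $\deg(F)$ over all faces $F$ counts each edge exactly twice, since an edge either borders two distinct faces or is traversed twice by the boundary walk of a single face; hence $\sum_{F}\deg(F)=2m$. On the other hand, every face boundary is a closed walk that contains a cycle and so has length at least $g$ (this is the point justified below), whence $2m=\sum_{F}\deg(F)\ge g f$, i.e. $f\le 2m/g$. Substituting $f=m-n+2$ from Euler's formula yields $m-n+2\le 2m/g$, and rearranging gives $m\cdot\frac{g-2}{g}\le n-2$, that is, $m\le\frac{g(n-2)}{g-2}$. Since $g\ge 3$ we have $g-2>0$, so the final division is legitimate and the inequality is exactly the claimed one.

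The step I expect to be the main obstacle is the assertion that every face has boundary length at least $g$. When $G$ is $2$-connected this is immediate, because each face is then bounded by a cycle and every cycle has length at least the girth. In the presence of bridges a face boundary is a closed walk that may traverse a bridge twice, so it need not be a cycle, and one must argue that such a walk still has length at least $g$. The key observation is that in a \emph{connected} graph containing a cycle no face can be bounded solely by bridges---a face meeting only tree edges would force all of $G$ to be that tree, contradicting the presence of a cycle---so every face boundary walk contains a cycle and hence has length at least $g$. Turning this observation into a fully rigorous statement, or alternatively reducing cleanly to the $2$-connected case and recombining, is the delicate part; once the face-length bound is secured, the Euler computation in the previous paragraph is entirely routine.
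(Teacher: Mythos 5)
The paper does not prove Theorem \ref{9}; it is quoted from West \cite{w}, so there is no internal proof to compare with. Judged on its own merits, your argument is the standard and correct one: Euler's formula combined with the double count $\sum_{F}\deg(F)=2|E(G)|\geq g\cdot f$, and your justification of the face-length bound --- no face of a connected plane graph containing a cycle can be bounded by bridges alone, since its boundary would then be a tree $T'$, and as $\mathbb{R}^2\setminus T'$ is connected the face would have to be all of $\mathbb{R}^2\setminus T'$, forcing $G\subseteq T'$ --- is exactly the right topological fact, even if you only sketch it. The two cases you set aside are really defects of the statement as printed rather than gaps in your proof: for $g=\infty$ the right-hand side reads $n(G)-2$, which every tree violates, and the paper itself has to patch this with a parenthetical remark when it invokes Theorem \ref{9} in the proof of the bound $b_{dR}(G)\leq 8$ for planar graphs; for disconnected $G$ with finite girth the inequality does hold, and it follows from your connected case by joining distinct components with new bridge edges (this preserves planarity and cannot create a shorter cycle), so that case deserves one extra line rather than an appeal to convention.
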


\begin{cor}\cite{w}\label{15}
If $G$ is a planar graph of order $n(G)\geq 3$. then
\[
|E(G)|\leq 3n(G)-6.
\]
\end{cor}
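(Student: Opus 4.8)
The plan is to deduce this bound directly from Theorem \ref{9} by exploiting the monotonicity of the right-hand side in the girth $g$, together with a separate (and essentially trivial) treatment of the acyclic case. First I would split into two cases according to whether $G$ contains a cycle.

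Suppose first that $G$ contains a cycle, so that its girth $g=g(G)$ is finite. Since every graph here is simple, any cycle has length at least $3$, whence $g\geq 3$. Writing the bound of Theorem \ref{9} as $\frac{g}{g-2}\bigl(n(G)-2\bigr)$ and noting that $\frac{g}{g-2}=1+\frac{2}{g-2}$ is strictly decreasing in $g$ for $g>2$, I would observe that this factor attains its maximum over the admissible range $g\geq 3$ precisely at $g=3$, where it equals $3$. Because $n(G)\geq 3$ forces $n(G)-2>0$, multiplying the inequality $\frac{g}{g-2}\leq 3$ by $n(G)-2$ yields
\[
|E(G)|\leq \frac{g}{g-2}\bigl(n(G)-2\bigr)\leq 3\bigl(n(G)-2\bigr)=3n(G)-6,
\]
as required.

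It remains to handle the case in which $G$ has no cycle, i.e. $G$ is a forest. Here Theorem \ref{9} cannot be applied verbatim, since its right-hand side degenerates as $g\to\infty$ to $n(G)-2$, which is actually smaller than the true maximum number of edges $n(G)-1$ of a tree; so this case must be argued by hand. A forest on $n(G)$ vertices has at most $n(G)-1$ edges, and the hypothesis $n(G)\geq 3$ gives $n(G)-1\leq 3n(G)-6$ (equivalently $2n(G)\geq 5$). Combining the two cases completes the argument.

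The only genuinely delicate point is this forest case: it is exactly where the standing assumption $n(G)\geq 3$ is used, and it shows why the corollary is not merely a mechanical substitution into Theorem \ref{9}. Everything else reduces to the elementary monotonicity of $g\mapsto g/(g-2)$ and the fact that simple graphs have girth at least $3$.
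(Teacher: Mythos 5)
Your proof is correct and follows the route the paper intends: Corollary \ref{15} is stated (and cited from \cite{w}) as a direct consequence of Theorem \ref{9}, obtained exactly as you do by noting that the factor $g/(g-2)$ is maximized at $g=3$, where it equals $3$. Your separate hand-treatment of the forest case is a careful touch that the bare citation glosses over, since the bound of Theorem \ref{9} degenerates to $n(G)-2$ as $g\to\infty$ and so does not literally cover acyclic graphs, which can have $n(G)-1$ edges.
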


\begin{cor}\cite{w}\label{10}
If $G$ is a planar graph, then $\delta(G)\leq 5$. If $G$ is a
planar graph of girth $g\geq 4$, then $\delta(G)\leq 3$. If $G$
is a planar graph of girth $g\geq 6$, then $\delta(G)\leq 2$.
\end{cor}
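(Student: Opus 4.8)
The plan is to prove all three statements by contradiction, in each case assuming the minimum degree exceeds the claimed bound and deriving more edges than a planar graph of the relevant girth can have. The common engine is the handshake identity $2|E(G)| = \sum_{v \in V(G)} \deg_G(v) \geq \delta(G)\, n(G)$, which I would combine with the upper bounds supplied by Theorem \ref{9} and Corollary \ref{15}.

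For the first assertion, I would suppose $\delta(G) \geq 6$. Then $n(G) \geq 7 \geq 3$, and the handshake bound yields $|E(G)| \geq 3n(G)$, contradicting Corollary \ref{15}, which forces $|E(G)| \leq 3n(G) - 6$. For the second and third assertions I would first note that a minimum degree of at least $3$ (hence also of at least $4$) rules out leaves, so $G$ is not a forest and its girth $g$ is finite; together with the hypothesis this places $g$ in the range $4 \leq g < \infty$ (respectively $6 \leq g < \infty$), so Theorem \ref{9} applies with a finite $g$. The key observation is that $g \mapsto \frac{g}{g-2}$ is decreasing, so $g \geq 4$ gives $|E(G)| \leq \frac{g(n(G)-2)}{g-2} \leq 2n(G)-4$, while $g \geq 6$ gives $|E(G)| \leq \frac{3}{2}n(G)-3$. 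Assuming $\delta(G) \geq 4$ (respectively $\delta(G) \geq 3$), the handshake bound gives $|E(G)| \geq 2n(G)$ (respectively $|E(G)| \geq \frac{3}{2}n(G)$), and in each case this strictly exceeds the girth-based upper bound, a contradiction.

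There is no substantial obstacle here: the corollary is a direct consequence of Euler's formula as already packaged in Theorem \ref{9} and Corollary \ref{15}. The only points requiring a little care are verifying that $n(G) \geq 3$ so that the edge bounds are valid (which is automatic once $\delta(G)$ is large), confirming that the girth is finite before invoking Theorem \ref{9}, and using the monotonicity of $\frac{g}{g-2}$ so that the worst case is realized at the smallest admissible girth, namely $g=4$ and $g=6$ respectively.
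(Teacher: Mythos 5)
Your proof is correct: the paper itself gives no argument for this corollary (it is quoted from West's textbook, reference \cite{w}), and your derivation via the handshake identity combined with Theorem \ref{9} and Corollary \ref{15} is exactly the standard argument behind the cited result, including the right care about finiteness of the girth and the monotonicity of $g/(g-2)$.
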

\begin{lem}\cite{w}\label{16}
Let $v$ be a vertex of a planar graph $G$ with $\deg_G(v)\geq 3$,
and let $E_v=\{xy|x, y \in N(v), xy \notin E(G)\}$. Then there
exists a subset $S\subseteq E_v$ such that $H=G+S$ is still a
planar graph and $H[N(v)]$ is $2$-connected.
\end{lem}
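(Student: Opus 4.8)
The plan is to fix a planar embedding of $G$ and then insert, inside the angular region around $v$, exactly those edges among the neighbors of $v$ that are needed to string them into a single spanning cycle. A cycle on at least three vertices is $2$-connected, and enlarging it by further edges on the same vertex set preserves $2$-connectivity, so it suffices to produce such a cycle planarly.

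First I would fix a plane drawing of $G$ and record the cyclic (rotational) order $u_1,u_2,\dots,u_k$ in which the edges $vu_1,\dots,vu_k$ emanate from $v$, where $k=\deg_G(v)\geq 3$ and indices are read modulo $k$. A small disk centred at $v$ is cut by these $k$ edges into $k$ angular sectors, the $i$-th sector lying between $vu_i$ and $vu_{i+1}$ and contained in a single face of $G$. For every $i$ with $u_iu_{i+1}\notin E(G)$ I would draw a new edge $u_iu_{i+1}$ as a curve that leaves $u_i$ alongside $vu_i$, runs through the $i$-th sector just past $v$, and returns to $u_{i+1}$ alongside $vu_{i+1}$. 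Setting $S=\{u_iu_{i+1}: u_iu_{i+1}\notin E(G),\ 1\le i\le k\}$ gives $S\subseteq E_v$, and the claim is that $H=G+S$ is planar.

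The planarity check is the step demanding the most care. The curve realizing $u_iu_{i+1}$ can be confined to a thin corridor made of the $i$-th sector together with one-sided neighbourhoods of $vu_i$ and $vu_{i+1}$ on the sector side; such a corridor lies inside a single face of $G$ and meets no edge of $G$ except at the endpoints $u_i,u_{i+1}$, so no new edge crosses an old one. Two new edges associated with distinct sectors are separated as well: consecutive corridors meet the shared edge $vu_{i+1}$ only on its two opposite sides, and corridors of non-consecutive sectors lie in disjoint angular regions near $v$; hence no two new edges cross. Thus all edges of $S$ may be inserted simultaneously without a crossing and $H$ is planar. The reason I route everything hugging the star at $v$, rather than invoking faces bounded by simple cycles, is to cover the case where $G$ is not $2$-connected, so that $v$ may be a cut-vertex and a single face may touch $v$ in several sectors; the corridor argument does not rely on the local structure of the faces.

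Finally I would conclude $2$-connectivity. Whether or not $u_iu_{i+1}$ was added, the edge $u_iu_{i+1}$ is present in $H$ for every $i$, so the spanning cycle $u_1u_2\cdots u_ku_1$ on $N(v)$ is a subgraph of $H[N(v)]$. Since $k\ge 3$, deleting any single vertex from this cycle leaves a path, whence $H[N(v)]$ minus any vertex remains connected; therefore $H[N(v)]$ is $2$-connected, as required.
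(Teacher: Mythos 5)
Your proof is correct. Note that the paper does not prove this lemma at all---it is quoted from the cited textbook---and your argument is precisely the standard one for this fact: fix a plane embedding, take the rotation order $u_1,\dots,u_k$ of the edges at $v$, and add each missing edge $u_iu_{i+1}$ routed through the angular sector between $vu_i$ and $vu_{i+1}$, so that $H[N(v)]$ acquires a spanning cycle and is therefore $2$-connected. Your extra care in routing the new edges along the star at $v$ (rather than appealing to faces bounded by cycles) correctly handles the case where $v$ is a cut-vertex and a single face meets several sectors at $v$, which is exactly the point where a sloppier version of this argument would break.
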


\section{Preliminaries and Exact values}

In this section we present some preliminary results, and determine
the double Roman bondage number in several families of graphs
including paths, cycles, complete graphs and complete bipartite
graphs. We first determine the double bondage number of paths
$P_n$.

\begin{thm}\label{p_n}
For any $n\geq 1$, $b_{dR}(P_n)=1$.
\end{thm}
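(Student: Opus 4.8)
The plan is to exhibit, for every $n$, a single edge of $P_n$ whose deletion strictly raises the double Roman domination number; since any edge set $B$ with $\gamma_{dR}(P_n - B) > \gamma_{dR}(P_n)$ must be nonempty, this forces $b_{dR}(P_n) = 1$. The one tool I need beyond Theorem \ref{pncn} is the routine additivity of $\gamma_{dR}$ over connected components: a DRDF on a disjoint union restricts to a DRDF on each component and conversely, and weights add, so $\gamma_{dR}(G_1 \cup G_2) = \gamma_{dR}(G_1) + \gamma_{dR}(G_2)$. Writing $P_n = v_1 v_2 \cdots v_n$, deletion of the edge $v_i v_{i+1}$ yields $P_i \cup P_{n-i}$, whence $\gamma_{dR}(P_n - v_i v_{i+1}) = \gamma_{dR}(P_i) + \gamma_{dR}(P_{n-i})$.

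Next I would feed the closed form of Theorem \ref{pncn} into this identity. Set $\epsilon(m) = 0$ when $3 \mid m$ and $\epsilon(m) = 1$ otherwise; then Theorem \ref{pncn} reads $\gamma_{dR}(P_m) = m + \epsilon(m)$, and with the convention $\gamma_{dR}(P_0) = 0$ this remains consistent since $\epsilon(0) = 0$. Consequently
\[
\gamma_{dR}(P_i) + \gamma_{dR}(P_{n-i}) = n + \epsilon(i) + \epsilon(n-i), \qquad \gamma_{dR}(P_n) = n + \epsilon(n).
\]
Thus deleting $v_i v_{i+1}$ increases the parameter exactly when $\epsilon(i) + \epsilon(n-i) > \epsilon(n)$, and the whole problem collapses to finding, for each residue of $n$ modulo $3$, an index $1 \le i \le n-1$ making this strict.

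I would then split on $n \bmod 3$. If $n \equiv 0$, then $\epsilon(n) = 0$ and the choice $i = 1$ gives $\epsilon(1) + \epsilon(n-1) = 1 + 1 = 2 > 0$ (here $n-1 \equiv 2$). If $n \equiv 2$, then $\epsilon(n) = 1$ and $i = 1$ again works, since $n - 1 \equiv 1$ and $\epsilon(1) + \epsilon(n-1) = 1 + 1 = 2 > 1$. The delicate case is $n \equiv 1 \pmod 3$: there the pendant edge fails, because $\epsilon(1) + \epsilon(n-1) = 1 + 0 = 1 = \epsilon(n)$ gives no increase. To get around this I would instead delete the interior edge $v_2 v_3$, splitting $P_n$ into $P_2 \cup P_{n-2}$; since $n - 2 \equiv 2 \pmod 3$ we obtain $\epsilon(2) + \epsilon(n-2) = 1 + 1 = 2 > 1 = \epsilon(n)$, so the parameter strictly increases (and $n - 2 \ge 1$ for all $n \equiv 1 \pmod 3$ with $n \ge 4$).

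In every case one deleted edge suffices, so $b_{dR}(P_n) \le 1$, and together with the trivial bound $b_{dR}(P_n) \ge 1$ this yields $b_{dR}(P_n) = 1$ for all $n \ge 2$; the case $n = 1$ is degenerate, the path having no edges. The additivity step and the modular bookkeeping are entirely routine, so I expect the only real obstacle to be the residue class $n \equiv 1 \pmod 3$, where one must refuse the pendant edge $v_1 v_2$ and delete $v_2 v_3$ instead.
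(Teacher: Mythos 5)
Your proposal is correct and follows essentially the same route as the paper: both arguments delete a single edge near one end of the path ($v_1v_2$ or $v_2v_3$, chosen by the residue of $n$ modulo $3$), decompose into two shorter paths, and compare weights via Theorem \ref{pncn} together with additivity of $\gamma_{dR}$ over components. Your handling is in fact slightly more careful than the paper's, since you flag the degenerate case $n=1$ (where $P_n$ has no edges and the bound is vacuous), which the paper's case $n=3k+1$ silently glosses over.
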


\begin{proof}
Let $V(P_n)=\{v_1,v_2\ldots v_n\}$. If $n=3k$, then by Theorem
\ref{pncn}, $\gamma_{dR}(P_n- v_2v_3)=\gamma_{dR}(P_2\cup
P_{n-2})=n+2>\gamma_{dR}(P_n)$ and so $b_{dR}(P_n)=1$. If
$n=3k+1$, then by Theorem \ref{pncn},
$\gamma_{dR}(P_n-v_2v_3)=\gamma_{dR}(P_2\cup
P_{n-2})=3+n-2+1=n+2>\gamma_{dR}(P_n)$ and so $b_{dR}(P_n)=1$.
Now assume that $n=3k+2$. Then by Theorem \ref{pncn},
$\gamma_{dR}(P_n-v_1v_2)=\gamma_{dR}(P_1\cup
P_{n-1})=2+n-1+1=n+2>\gamma_{dR}(P_n)$ and so $b_{dR}(P_n)=1$.
\end{proof}

\begin{thm}
For any $n\geq 3$ we have:\\
\[
b_{dR}(C_n)=\left\{
    \begin{array}{ll}
      1, & \hbox{if $n\equiv 2$ or $4\, (mod\, 6)$;} \\
      2, & \hbox{if $n\not\equiv 2$ or $4\, (mod\, 6)$.}

    \end{array}
  \right.
\]
\end{thm}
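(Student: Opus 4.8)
The plan is to exploit the elementary fact that all edges of a cycle are equivalent, so that deleting any single edge yields the path $P_n$ up to isomorphism, together with the additivity of $\gamma_{dR}$ over the connected components of a forest. Since $C_n-e\cong P_n$ for every edge $e$, we have $\gamma_{dR}(C_n-e)=\gamma_{dR}(P_n)$, and deleting two edges always produces a disjoint union $P_a\cup P_b$ with $a+b=n$ and $a,b\ge 1$, whence $\gamma_{dR}(C_n-\{e_1,e_2\})=\gamma_{dR}(P_a)+\gamma_{dR}(P_b)$. The whole proof is then a comparison of these path-values against $\gamma_{dR}(C_n)$ using Theorem \ref{pncn}.

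First I would settle the cases $n\equiv 2,4\pmod 6$. Here $n\not\equiv 1,5\pmod 6$ gives $\gamma_{dR}(C_n)=n$, while $n\not\equiv 0\pmod 3$ gives $\gamma_{dR}(P_n)=n+1$. Hence deleting one (arbitrary) edge already yields $\gamma_{dR}(C_n-e)=\gamma_{dR}(P_n)=n+1>n=\gamma_{dR}(C_n)$, so $b_{dR}(C_n)=1$.

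Next I would treat the remaining residues $n\equiv 0,1,3,5\pmod 6$ and establish the lower bound $b_{dR}(C_n)\ge 2$. Running through Theorem \ref{pncn} in each of these four classes shows $\gamma_{dR}(P_n)=\gamma_{dR}(C_n)$ (both equal $n$ when $n\equiv 0,3$, and both equal $n+1$ when $n\equiv 1,5$). Because every single-edge deletion gives the same graph $P_n$, no one edge can raise the double Roman domination number, so at least two edges are required.

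Finally I would prove the matching upper bound $b_{dR}(C_n)\le 2$ for these residues by constructing a suitable two-edge deletion. The key idea is that if one can write $n=a+b$ with $a,b\ge 1$ and $a,b\not\equiv 0\pmod 3$, then by Theorem \ref{pncn} the resulting forest has weight $\gamma_{dR}(P_a)+\gamma_{dR}(P_b)=(a+1)+(b+1)=n+2$, which exceeds $\gamma_{dR}(C_n)\le n+1$. Such a splitting is realized concretely: deleting the two edges incident with a fixed vertex produces $P_1\cup P_{n-1}$, while deleting $v_1v_2$ and $v_3v_4$ produces $P_2\cup P_{n-2}$. I would then check, class by class, that one of these two splittings has both parts $\not\equiv 0\pmod 3$ (for $n\equiv 0,3,5$ the split $1+(n-1)$ works, and for $n\equiv 1$ the split $2+(n-2)$ works, both parts lying in residues $\{1,2\}$ modulo $3$). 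The main obstacle is precisely this last bookkeeping: verifying that for each of the four residue classes modulo $6$ a valid $a+b$ decomposition exists with both summands nonzero modulo $3$ and no degenerate small case, after which $b_{dR}(C_n)=2$ follows.
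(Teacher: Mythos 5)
Your proposal is correct and takes essentially the same route as the paper: both obtain the cases $n\equiv 2,4 \pmod 6$ and the lower bound $b_{dR}(C_n)\geq 2$ from the fact that $C_n-e\cong P_n$ combined with Theorem \ref{pncn}, and both obtain the upper bound $b_{dR}(C_n)\leq 2$ by deleting a second edge so that the result is a union of two paths whose lengths are not divisible by $3$. The only organizational difference is that the paper gets the second edge by citing Theorem \ref{p_n} (that $b_{dR}(P_n)=1$), whereas you inline precisely the splittings $P_1\cup P_{n-1}$ and $P_2\cup P_{n-2}$ that the proof of that theorem uses, together with the (correct) residue bookkeeping.
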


\begin{proof}
Let $V(C_n)=\{v_1,v_2\ldots v_n\}$. Clearly removing any vertex
of $C_n$ leaves a $P_n$. If $n \equiv 2$ or $4$ mod $6$, then
according to Theorem \ref{pncn}, $\gamma_{dR}(C_n)=n$, while
$\gamma_{dR}(P_n)=n+1$, and so $b_{dR}(C_n)=1$. Next assume that
$n \not \equiv 2$ or $4$ mod $6$. Then Theorem \ref{pncn} leads
to $\gamma_{dR}(C_n)=\gamma_{dR}(P_n)$. We thus obtain that
$b_{dR}(C_n) \geq 2$. On the other hand because of $b_{dR}(C_n)
\geq 2$, for each edge $e_1\in E(C_n)$ we have
$\gamma_{dR}(C_n)=\gamma_{dR}(C_n-e_1)=\gamma_{dR}(P_n)$. By
Theorem \ref{p_n}, there exists an edge $e_2\in E(C_n)$ such that
$\gamma_{dR}(C_n-\{e_1,e_2\})=\gamma_{dR}(P_n-e_2)>\gamma_{dR}(P_n)=\gamma_{dR}(C_n)$,
thus $b_{dR}(C_n) \leq 2$. Consequently, $b_{dR}(C_n) = 2$.
\end{proof}

\begin{cor}\label{n1}
If $G$ is a graph of order $n \geq 3$ with exactly $k \geq 1$ vertices of degree $n-1$, then $b_{dR}(G) = \lceil k/2 \rceil$.
\end{cor}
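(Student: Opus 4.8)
The plan is to reduce the computation of $b_{dR}(G)$ to a purely combinatorial edge–covering problem on the universal vertices of $G$. First I would record the governing characterization: for a graph $H$ of order $n\geq 2$ one always has $\gamma_{dR}(H)\geq 3$, and $\gamma_{dR}(H)=3$ precisely when $H$ has a universal vertex (a vertex of degree $n-1$). Indeed, assigning $3$ to a universal vertex and $0$ elsewhere is a DRDF of weight $3$, while any $\{0,2,3\}$-valued DRDF of weight $3$ must place a single $3$ and $0$ on all remaining vertices, forcing that $3$-vertex to be adjacent to everything. This is exactly Theorem~\ref{12}(1) in the connected case, and the direct argument just sketched shows it persists even when $H$ is disconnected, which I will need because $G-B$ need not be connected. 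In particular, since $G$ has $k\geq 1$ vertices of degree $n-1$, it is connected and $\gamma_{dR}(G)=3$.

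Consequently $\gamma_{dR}(G-B)>\gamma_{dR}(G)=3$ holds if and only if $G-B$ has no universal vertex. A non-universal vertex of $G$ remains non-universal after deleting edges, so the only vertices that can still be universal in $G-B$ are the $k$ universal vertices $u_1,\dots,u_k$ of $G$; and $u_i$ remains universal in $G-B$ exactly when no edge of $B$ is incident with $u_i$. Hence $b_{dR}(G)$ equals the minimum number of edges of $G$ needed so that every $u_i$ is incident with at least one chosen edge, i.e.\ the minimum size of an edge set covering $\{u_1,\dots,u_k\}$.

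For the lower bound, each edge has at most two endpoints and so covers at most two of the $u_i$, whence any such covering set has size at least $\lceil k/2\rceil$ and $b_{dR}(G)\geq\lceil k/2\rceil$. For the upper bound I would use that $u_1,\dots,u_k$ are pairwise adjacent (each being universal), so they span a clique $K_k$ in $G$. When $k$ is even, a perfect matching on this clique gives $k/2$ edges covering all $u_i$; when $k$ is odd, a maximum matching covers $u_1,\dots,u_{k-1}$ with $(k-1)/2$ edges, and one further edge incident with the remaining vertex $u_k$ (available since $n\geq 3$ forces $u_k$ to have a neighbor) covers it, for a total of $(k+1)/2=\lceil k/2\rceil$ edges. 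Deleting such a set leaves no universal vertex and hence raises $\gamma_{dR}$ above $3$, giving $b_{dR}(G)\leq\lceil k/2\rceil$ and therefore equality.

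The only genuinely delicate point is the characterization of graphs with $\gamma_{dR}=3$ applied to the possibly disconnected graph $G-B$: Theorem~\ref{12}(1) is stated for connected graphs, so I would either supply the short direct argument above or check that the specific edge sets I delete leave a graph to which the cited result still applies (the example of deleting two incident edges at a single universal vertex of $K_3$ shows disconnection genuinely occurs). The parity bookkeeping in the matching step, together with the verification that a covering edge for the leftover vertex exists when $k$ is odd — which is exactly where the hypothesis $n\geq 3$ enters — is routine but should be stated explicitly.
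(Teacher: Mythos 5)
Your proof is correct and takes essentially the same route as the paper: both arguments reduce the problem to finding a minimum edge set covering the $k$ universal vertices, with the lower bound coming from the fact that each deleted edge can "de-universalize" at most two of them, and the upper bound from an edge cover of the clique they span. The one place you go beyond the paper is your explicit treatment of the case where $G-B$ is disconnected (where Theorem~\ref{12}(1), stated only for connected graphs, does not literally apply); the paper cites Theorem~\ref{12} without comment, so your short direct characterization of graphs with $\gamma_{dR}=3$ is a legitimate patch of a small gap rather than a different approach.
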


\begin{proof}
Since $k \geq 1$, we have $\gamma_{dR}(G)=3$ by Theorem \ref{12}.
Let $S=\{v\in G, \deg_G(v)=n-1\}$. Then $S$ is a clique of $G$.
Consider a minimum edge cover $E^\prime$ of $S$. We know that
$|E^\prime |=\lceil  \frac{k}{2} \rceil$. Now $G - E^\prime$ has
no vertex of degree $n-1$, and so $\gamma_{dR}(G -
E^\prime)>3=\gamma_{dR}(G)$. Thus we conclude that $b_{dR}(G)
\leq \lceil \frac{k}{2} \rceil$.

Now suppose that $b_{dR}(G)=l,$ and let $S$ be an edge set of
size $l$ such that $\gamma_{dR}(G - S)>\gamma_{dR}(G)$.
Note that $S$ covers at most $2l$ vertices of $G$. If $2l<k$,
then $G-S$ has at least one vertex of degree $n-1$, and
hence $\gamma_{dR}(G- S)=3=\gamma_{dR}(G)$, a
contradiction. Therefore $2l \geq k$, and we conclude that
$b_{dR}(G) \geq \lceil \frac{k}{2} \rceil$.
\end{proof}

As a consequence of Corollary \ref{n1}, we have the following.

\begin{cor}
If $n \geq 3$, then $b_{dR}(K_n) = \lceil n/2 \rceil$, $b_{dR}(W_n)=1.$
\end{cor}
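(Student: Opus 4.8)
The plan is to apply Corollary \ref{n1} directly in both cases, so the entire argument reduces to counting the vertices of degree $n-1$ in each graph.

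First I would handle $K_n$. Every vertex of $K_n$ is adjacent to all of the remaining $n-1$ vertices, so each of the $n$ vertices has degree $n-1$. Hence the parameter $k$ of Corollary \ref{n1} equals $n$, and the corollary immediately gives $b_{dR}(K_n) = \lceil n/2 \rceil$.

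Next I would treat the wheel $W_n = K_1 \vee C_{n-1}$. The hub vertex is joined to every vertex of the rim cycle, so it has degree $n-1$. Each rim vertex, on the other hand, is adjacent only to its two neighbors on $C_{n-1}$ together with the hub, giving it degree $3$. I would then observe that for $n \geq 5$ we have $3 < n-1$, so the hub is the \emph{unique} vertex of degree $n-1$; thus $k = 1$, and Corollary \ref{n1} yields $b_{dR}(W_n) = \lceil 1/2 \rceil = 1$.

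The only delicate point is the boundary case $n = 4$, where $W_4 = K_1 \vee C_3 = K_4$: here every rim vertex also attains degree $3 = n-1$, so $k = 4$ rather than $1$, and the wheel formula should be understood as holding for $n \geq 5$, the value for $W_4$ being the one supplied by the $K_n$ computation. Apart from isolating this degenerate overlap, I do not expect any real obstacle: the work is entirely in confirming which vertices reach degree $n-1$, after which both identities fall out of Corollary \ref{n1}.
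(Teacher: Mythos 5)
Your proposal is correct and matches the paper's approach exactly: the paper offers no separate proof, presenting the result as an immediate consequence of Corollary \ref{n1}, which is precisely the degree count you carry out for $K_n$ and for the hub and rim vertices of $W_n$. Your remark on the degenerate case $n=4$, where $W_4 = K_1 \vee C_3 = K_4$ has all four vertices of degree $n-1$ and hence $b_{dR}(W_4) = \lceil 4/2 \rceil = 2 \neq 1$, is a genuine caveat that the paper silently overlooks; the wheel identity as stated really does require $n \geq 5$.
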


We next determine the double bondage number of complete multi partite graphs.

\begin{lem}
Let $1\leq n_1\leq n_2\leq \cdots\leq n_r$ be integers. Then

\[
b_{dR}(K_{n_1,n_2,\cdots,n_r})=\left\{
    \begin{array}{llll}
\lceil \frac{l}{2}\rceil &   \hbox{if $  n_1=\cdots =n_l=1, n_{l+1}\geq 2$}  \\
\lceil \frac{l}{2}\rceil & \hbox{if $ n_1=\cdots =n_l=2, n_{l+1}\geq 3$}  \\
3(r-1)+1 &  \hbox{if $ n_1=n_2=\cdots =n_r=3$} \\
\sum_{i=1}^{r-1}n_i &   \hbox{if  otherwise}.
   \end{array}
  \right.
\]
\end{lem}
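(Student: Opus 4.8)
The plan is to read off $\gamma_{dR}(G)$ from Lemma~\ref{1.3} in each regime and then determine exactly which deletions force a strict increase. The organizing observation is that a double Roman dominating function using only labels $\{0,2,3\}$ has positive labels summing to its weight, and since $6=3+3=2+2+2$, a weight-$6$ function has one of two shapes: \emph{(Type A)} two vertices labelled $3$ whose closed neighbourhoods cover $V$, i.e.\ a dominating pair; or \emph{(Type B)} three vertices labelled $2$ such that every other vertex has at least two of them as neighbours. The analogous (easier) analysis, matching Theorem~\ref{12}, shows a weight-$4$ function is a pair of non-adjacent vertices each joined to everything else, and a weight-$3$ function is a single vertex of degree $n-1$. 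Cases~1 and~2 then fall out quickly. When $n_1=1$ the $l$ singleton parts are precisely the vertices of degree $n-1$, so Corollary~\ref{n1} gives $b_{dR}=\lceil l/2\rceil$ at once. When $n_1=2$ (so $\gamma_{dR}=4$) the weight-$4$ configurations are exactly the $l$ size-$2$ parts; since deletion can neither create a new such pair nor a vertex of degree $n-1$, one must break every size-$2$ part, and as a single edge breaks at most two of them (when it joins two distinct size-$2$ parts), the optimum is $\lceil l/2\rceil$, realized by pairing the parts and deleting a connecting edge.

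For Case~4 ($n_1\ge 3$ and not all parts equal to $3$, hence $n_r\ge 4$) the upper bound is clean: isolate one vertex $v$ of the largest part $P_r$ by deleting its $n-n_r$ incident edges. Then $G-B$ is the disjoint union of $v$ with $K_{n_1,\dots,n_{r-1},n_r-1}$, every remaining part still has size $\ge 3$, so Lemma~\ref{1.3} gives $\gamma_{dR}(G-B)=2+6=8>6$ and $b_{dR}\le n-n_r$. For Case~3 this very deletion \emph{fails}, which is precisely why the answer carries a $+1$: removing a vertex from a size-$3$ part creates a size-$2$ part, so $\gamma_{dR}(K_{3,\dots,3,2})=4$ and the total is only $2+4=6$ (the surviving weight-$6$ function being the whole part $P_r$ labelled $2$, a Type~B triple). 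The remedy is to delete one further edge incident to this residual size-$2$ part, pushing its double Roman number to $5$ and the total to $7$; hence $b_{dR}\le 3(r-1)+1$.

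The lower bounds are the substance. For Case~4 it suffices to show that destroying every dominating pair already costs at least $n-n_r$, because any admissible $B$ must in particular kill all Type~A functions. The approach I would take is an injection: choose a least-damaged vertex $v^\ast$ of $P_r$; if $v^\ast$ has lost no edge, then for each of the $n-n_r$ vertices $w\notin P_r$ the pair $\{v^\ast,w\}$ must fail, which (since $v^\ast$ misses only $P_r\setminus v^\ast$) forces a deleted edge $u_w w$ with $u_w\in P_r\setminus v^\ast$; distinct $w$ give distinct such edges, so $|B|\ge n-n_r$. More generally, if \emph{any} part has an undamaged vertex the same injection gives $|B|\ge n-n_{m}\ge n-n_r$. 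Combined with the construction above this pins down $b_{dR}=n-n_r$.

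The hard part will be the remaining case in which \emph{every} vertex is incident to a deleted edge, since then the clean injection degrades by the term coming from the deleted neighbours of $v^\ast$. In this balanced situation I expect to fall back on Type~B functions rather than pairs: I would track transversal triples (one vertex in each of three distinct parts), which are robust because an outside vertex only drops below two neighbours if it loses several incident edges, and argue that a sub-budget of $n-n_r-1$ edges cannot simultaneously kill all pairs and all such triples. The most delicate piece is Case~3's exact threshold: one must show that a deletion of exactly $3(r-1)$ edges that annihilates every dominating pair is forced (by equality in the injection) to concentrate so that some size-$3$ part survives intact as a Type~B "part-triple", keeping $\gamma_{dR}=6$ and establishing $b_{dR}=3(r-1)+1$. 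Classifying the Type~B triples as transversal triples or full size-$3$ parts, and controlling how a fixed edge budget can break all pairs and all of these triples at once, is the crux of the argument.
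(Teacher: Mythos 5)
Your Cases 1 and 2 are correct and essentially the paper's own argument (Corollary \ref{n1} for the singleton parts; for the size-$2$ parts, the observation that the weight-$4$ configurations are exactly the intact size-$2$ parts, each deleted edge can damage at most two of them, and pairing the parts realizes the bound). Your upper-bound constructions in Cases 3 and 4 also coincide with the paper's: isolate a vertex of one part, and in the all-$3$'s case add one further edge incident to the residual size-$2$ part. However, the proposal has genuine gaps exactly where you yourself place "the hard part" and "the crux": the lower bounds in Cases 3 and 4 are never proved. Concretely: (i) in Case 4 your injection only covers the sub-case in which some vertex is untouched by $B$; when every vertex is incident to a deleted edge you get only $|B|\geq n/2$, which is strictly weaker than the required $\sum_{i=1}^{r-1}n_i=n-n_r$ whenever $n_r<n/2$ (for $K_{4,4,4}$ one needs $|B|\geq 8$ while $n/2=6$), and your proposed remedy --- tracking transversal triples and part-triples against a sub-budget of $n-n_r-1$ edges --- is a plan with no counting argument attached; (ii) in Case 3 the entire lower bound $b_{dR}\geq 3(r-1)+1$ is deferred to an unproved classification-and-threshold claim.

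For comparison, here is how the paper closes Case 3; this is the bookkeeping your sketch would have to supply. Let $T$ be the set of endpoints of deleted edges. The pair observation (your Type A) forces $V_i\subseteq T$ for every part except at most one, say $\bigcup_{i=1}^{r-1}V_i\subseteq T$, and then $|V_r\cap T|\geq 2$. If $V_r=\{y,y',y''\}$ with $y\notin T$, then killing every dominating pair $\{z,y\}$ forces, for each of the $3(r-1)$ vertices $z$ outside $V_r$, a deleted edge from $z$ to $\{y',y''\}$, and killing the Type-B function that assigns $2$ to all of $V_r$ forces some $z$ to lose its edges to both $y'$ and $y''$; these forced edges are pairwise distinct, so $|X|\geq 3(r-1)+1$. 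If instead $T=V$, the paper splits on whether every vertex has degree at least $2$ in the deleted subgraph $H$ (whence $2|X|\geq 2\cdot 3r$) or some $x$ has $H$-degree one, in which case another explicit count of forced distinct edges again gives $3(r-1)+1$. Your Type A / Type B framework is the right vocabulary --- it is the same pair of function shapes the paper exploits --- but without this case analysis and edge-counting the two critical inequalities remain unestablished. (Note also that the paper itself only asserts Case 4 is "similar to Case 1," so a complete write-up there requires adapting this count, not merely citing it.)
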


\begin{proof}
Let $G=K_{n_1, n_2,\cdots, n_r}$ and $V_1, V_2, \cdots, V_r$ be
partite sets of $V(G)$, where $|V_j|=n_j$ for $1\leq j\leq r$.
Also suppose that $X\subseteq E(G)$ such that $\gamma_{dR}(G)<
\gamma_{dR}(G- X)$. Let $H$ be the subgraph of $G$ induced by $X$ and let
$T=V(H)$.\\ If $|V_1|=|V_2|=\cdots =|V_l|=1$ and $|V_{l+1}|\geq
2$, then $b_{dR}(K_{n_1,n_2,\cdots,n_r})=\lceil\frac{l}{2}\rceil$
by Corollary \ref{n1}. Now suppose that $|V_1|=|V_2|=\cdots
=|V_l|=2$ and $|V_{l+1}|\geq 3$. If $T\cap V_i= \varnothing$
for some $1\leq i\leq l$, then $G-
X\cong\overline{K_2}\vee K$ for some graph $K$ such that $\Delta(K)\leq |V(K)|-2$. Hence
$\gamma_{dR}(G)=\gamma_{dR}(G- X)=4$ by Theorem \ref{12}, which is
a contradiction. Hence $T\cap V_i\neq \varnothing$ for each
$1\leq i\leq l$. Then $|T|\geq l$. Therefore $|X|\geq
\lceil\frac{l}{2}\rceil$ and we conclude that $b_{dR}(G)\geq
\lceil\frac{l}{2}\rceil$. For $1\leq i\leq l$, choose $v_i\in
V_i$ and consider $Y=\{v_1v_2, v_3v_4, \cdots v_{l-1}v_l\}$ if
$l$ is even and $Y=\{v_1v_2, v_3v_4, \cdots v_{l-2}v_{l-1},
v_1v_l\}$ if $l$ is odd. In both cases $G- Y\ncong
\overline{K_2}\vee K$ for each graph $K$. Hence
$\gamma_{dR}(G-Y)\geq 5$ by Theorem \ref{12} and therefore
$b_{dR}(G)\leq \lceil\frac{l}{2}\rceil$.

Now suppose that $n_i\geq 3$, for any $1\leq i \leq r$. In this
case $\gamma_{dR}(G)=6$ by Lemma \ref{1.3}. Therefore
$V_i\subseteq T$ for each $i$, except probably one $i$, since if
$v_t\in V_t\setminus T$ for $t\in \{t_1, t_2\}$, then $\{V_{t_1},
V_{t_2}\}$ dominate all vertices of $G- X$ and hence
$\gamma_{dR}(G- X)\leq 6$, which is a contradiction. We consider
two cases.

{\bf Case 1.} For each $1\leq i\leq r$, $n_i=3$.\\
Suppose that $\bigcup_{i=1}^{r-1}V_i\subseteq T$. Then $|V_r\cap T|\geq 2$, since otherwise $\gamma_{dR}(G- X)=6$. First suppose that $|V_r\cap T|=2$. Assume that $V_r=\{y,y',y''\}$
and $y\in V_r\setminus T$. If there exists $z\in \bigcup_{i=1}^{r-1}V_i$ such that $z$ is adjacent to $y'$ and $y''$ in $G-X$, then $(V\setminus \{z,y\},\emptyset , \{z,y\})$ is a DRDF, and hence
$\gamma_{dR}(G- X)\leq 6$, which is a contradiction. So for any $z\in \bigcup_{i=1}^{r-1}V_i$, there exists an edge $zy'$ or $zy''$ which belongs to $X$. On the other hand there exists 
$x\in \bigcup_{i=1}^{r-1}V_i$, which is not adjacent to $y',y''$ in $G-X$, since otherwise $(V\setminus \{y,y',y''\},\{y,y',y''\},\emptyset)$ is a DRDF for $G-X$, which is a contradiction. Hence 
$|X|\geq 3(r-1)+1$. Now suppose that $V_r\subseteq T$. Hence $H$ is a spanning subgraph of $G$. If for any vertex $x\in V(G)$ we have $\deg_H(x)\geq 2$, then
\[
2|X|= \sum_{x\in V}\deg_H(x)\geq 2(3r)
\]

and so $|X|\geq 3r>3(r-1)+1.$ Suppose that there exists a vertex $x\in V(G)$, with $\deg_H(x)=1$. Without lose of generality, suppose that $x\in V_1=\{x,x',x''\}$ and $x$ is 
adjacent to $y\in V_r$. If $y$ is adjacent to $x'$ and $x''$ in $G- X$, then $(V\setminus \{x,y\}, \emptyset , \{x,y\})$ is a DRDF for $G- X$, which is a contradiction. Hence $yx'$ or $yx''$ 
belongs to $X$. Also there are two edges $y't_1,y''t_2\in X$ for some vertices $t_1, t_2$ of $G$. If there exists a vertex $z\in \bigcup_{i=2}^{r-1}V_i$, such that $z$ is adjacent to all vertices $y, x', x''$ 
in $G- X$, then $(V\setminus \{z,x\},\emptyset, \{z,x\})$ is a DRDF for $G- X$, which is impossible. Hence for any $z\in \bigcup_{i=2}^{r-1}V_i$, $X\cap \{zx',zx'',zy\}\neq \emptyset$. Therefore
\[
|X|\geq 3(r-2)+2+2=3(r-1)+1.
\]

Hence in this case $b_{dR}(G)\geq 3(r-1)+1$. On the other hand for $x,x'\in V_1, y\in V_2$ consider the set $X=\{xz ; z\in \bigcup_{i=2}^{r}V_i, x'y\}$. Clearly $G- X=K_1\cup K$, and
$K\ncong \overline{K_2}\vee L$ for any graph $L$. This means that $\gamma_{dR}(G- X)\geq 2+5=7$, and hence $b_{dR}(G)\leq 3(r-1)+1$. We conclude that $b_{dR}(G)= 3(r-1)+1$.

{\bf Case 2.} For at least one $i$, $n_i\geq 4$. The argument of this case is similar to case 1.
\end{proof}

\section{Bounds for the double Roman bondage number}
In this section we present various bounds for the double Roman bondage number. We begin with the following.

\begin{thm}\label{thmn1}
If $G$ is a graph, and $xyz$ a path of length $2$ in $G$, then \\
$(1)$ $b_{dR}(G) \leq \deg_G(x)+\deg_G(y)+\deg_G(z)-3- \vert N(x) \cap N(y) \vert$.\\
If $x$ and $z$ are adjacent, then\\
$(2)$ $b_{dR}(G) \leq \deg_G(x)+\deg_G(y)+\deg_G(z)-4- \vert N(x) \cap N(y) \vert$.\\
\end{thm}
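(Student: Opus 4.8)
The plan is to exhibit, for each of $(1)$ and $(2)$, an explicit edge set $B$ of the claimed cardinality and then prove $\gamma_{dR}(G-B) > \gamma_{dR}(G)$. Throughout I would use the elementary fact that $G-B$ is a spanning subgraph of $G$, so every DRDF of $G-B$ is already a DRDF of $G$; hence $\gamma_{dR}(G-B)\ge \gamma_{dR}(G)$ holds automatically and the entire content of the theorem is the \emph{strict} inequality. As in the introduction I may assume minimum DRDFs take values only in $\{0,2,3\}$, and I will write $w(\cdot)$ for the weight.

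For the construction, set $S=\{x,y,z\}$ and let $B$ consist of all edges of $G$ meeting $S$, \emph{except} the edge $xy$ and, for every common neighbour $w\in N(x)\cap N(y)$, the edge $yw$. Writing $m=|N(x)\cap N(y)|$ and letting $e_{\mathrm{int}}$ be the number of edges inside $S$, the number of edges meeting $S$ is $\deg_G(x)+\deg_G(y)+\deg_G(z)-e_{\mathrm{int}}$, and we retain $1+m$ of them, so
\[
|B|=\deg_G(x)+\deg_G(y)+\deg_G(z)-e_{\mathrm{int}}-1-m .
\]
In case $(1)$ we have $x\not\sim z$, hence $e_{\mathrm{int}}=2$ and we recover the bound in $(1)$; in case $(2)$ we have $x\sim z$, hence $e_{\mathrm{int}}=3$ and we recover $(2)$. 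The point of this choice is the resulting structure of $G-B$: the vertex $x$ becomes a leaf whose only neighbour is $y$; $y$ retains $x$ and all common neighbours $w$; and $z$ becomes \emph{isolated} in case $(1)$, while in case $(2)$ (where $z\in N(x)\cap N(y)$, so $yz$ is kept) $z$ becomes a second leaf at $y$. The key structural feature is that in $G-B$ the only vertex adjacent to $x$ or to $z$ is $y$.

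To prove the increase, let $g$ be a minimum DRDF of $G':=G-B$, and define $h$ on $G$ by $h(y)=3$, $h(x)=h(z)=0$, and $h=g$ elsewhere. I claim $h$ is a DRDF of $G$ with $w(h)\le w(g)-1$. For validity: $x$ and $z$ are dominated by $y$ (now of value $3$), since both are adjacent to $y$ in $G$; any other vertex keeps its $g$-value, sees in $G$ at least the neighbours it saw in $G'$, and the only reassigned neighbour it can see is $y$, whose value merely rose to $3$, so no $0$-vertex loses its support. For the weight in case $(1)$: the isolated vertex forces $g(z)\ge 2$, and since $x$ is a leaf at $y$ one has $g(x)+g(y)\ge 2$ (if $g(x)=0$ then necessarily $g(y)=3$); thus $g(x)+g(y)+g(z)\ge 4$, whereas $h$ spends only $3$ on $S$, giving $w(h)\le w(g)-1$. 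Then $\gamma_{dR}(G)\le w(h)\le \gamma_{dR}(G')-1$, i.e.\ $\gamma_{dR}(G-B)>\gamma_{dR}(G)$, which proves $(1)$.

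The hard part will be the weight inequality in case $(2)$. There $z$ is itself a common neighbour of $x$ and $y$, so it cannot be isolated and instead becomes a leaf at $y$; correspondingly the budget is one smaller. Since a single vertex $y$ of value $3$ now dominates \emph{both} leaves $x$ and $z$, the reassignment above only guarantees $g(x)+g(y)+g(z)\ge 3$, which is insufficient. Extracting the remaining unit is the crux: one must exploit the extra edge $xz$ present in $G$, re-optimising the three values on the triangle $xyz$ jointly with the contribution $y$ makes toward the common neighbours $w$, to argue that a minimum $g$ on $G'$ cannot spend as little as $3$ on $S$. I expect this triangle bookkeeping — not the construction of $B$ — to be where the genuine difficulty lies.
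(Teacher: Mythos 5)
Your treatment of part (1) is correct and is essentially the paper's proof in mirror image: the paper keeps $yz$ together with the edges from $y$ to $N(x)\cap N(y)$ (so $x$ becomes isolated and $z$ a leaf at $y$), while you keep $xy$ together with those same edges (so $z$ becomes isolated and $x$ a leaf at $y$); your direct count $g(x)+g(y)+g(z)\ge 2+2=4$ against the $3$ units spent by $h$ replaces the paper's case analysis and is, if anything, cleaner. One caveat: as phrased, (1) is asserted for every path $xyz$, including those with $xz\in E(G)$; in that case your $B$ for (1) must be modified to delete $yz$ as well (keep only $xy$ and the edges from $y$ to $(N(x)\cap N(y))\setminus\{z\}$), which again has the cardinality required by (1) and makes $z$ isolated, so your argument then goes through verbatim.

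The genuine gap is part (2), which you explicitly leave open --- and it cannot be closed, because statement (2) is false. Take $G=K_3$ with the triangle as the path $xyz$: the right-hand side of (2) is $2+2+2-4-1=1$, but $K_3-e\cong P_3$ and $\gamma_{dR}(P_3)=3=\gamma_{dR}(K_3)$ by Theorem \ref{pncn}, so no single edge deletion raises the double Roman domination number, and in fact $b_{dR}(K_3)=2$ (as the paper's own Corollary \ref{n1} asserts with $k=3$). Your construction displays the obstruction exactly where you predicted it: in $G-B$ both $x$ and $z$ are leaves at $y$, so $g(y)=3$, $g(x)=g(z)=0$ is locally feasible and spends only $3$ on $\{x,y,z\}$ --- precisely what the triangle costs in $G$ --- so no strict increase can be forced. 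The ``triangle bookkeeping'' you deferred is therefore not difficult but impossible. For completeness: the paper's own proof of (2) is also invalid, for a complementary reason. When $xz\in E(G)$, the kept edge $yz$ is itself one of the edges between $y$ and $N(x)\cap N(y)$ (since $z$ is then a common neighbour), so the paper's deleted set has cardinality $\deg_G(x)+\deg_G(y)+\deg_G(z)-3-\vert N(x)\cap N(y)\vert$, i.e.\ its argument only re-proves (1). In short, the paper achieves the domination increase with an edge set of the wrong (larger) size, you achieve the claimed size but the increase fails, and no argument can reconcile the two.
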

\begin{proof}
Let $H$ be the graph obtained from $G$ by removing the edges
incident to $x,y$ and $z$ with exception of $yz$ and all edges
between $y$ and $N(x) \cap N(y)$. In $H$, the vertex $x$ is
isolated, $z$ is leaf, $y$ is adjacent to $z$, and all neighbors
of $y$ in $H$, if any, lie in $N_G(x)$.

Let $f=(V_0,V_2,V_3) $ be a $\gamma_{dR}(H)$-function. Then $x
\in V_2$ and, without loss of generality, assume that $z \in V_0
\cup V_2$. If $z \in V_0$, then $y \in V_3$ and therefore $(V_0
\cup \{ x \},V_2\setminus \{ x \},V_3)$ is a DRDF on $G$ of
weight less than $f$, and $(1)$ as well as $(2)$ are proved.

Now assume that $z \in V_2$. Obviously $y \not \in V_3$. If $y \in
V_2$, then $(V_0 \cup \{ z \}, V_2\setminus \{ y,z \},V_3 \cup \{
y \})$ is a DRDF on $H$ of weight less than $f$, that is a
contradiction. However, if $y \in V_0$, then $((V_0 \cup \{ x,z
\})\setminus \{ y \}, V_2\setminus \{ x,z \},V_3 \cup \{ y \})$,
is a DRDF of $G$ of weight less than $w(f)$, and again $(1)$ and
$(2)$ are proved.
\end{proof}

Applying Theorem \ref{thmn1} on the path $xyz$ such that one of the vertices $x,y$ or $z$ has minimum degree, we obtain the next result immediately.

\begin{cor}\label{corn0}
If $G$ is a connected graph of order $n \geq 3$, then
\[
b_{dR}(G) \leq \delta(G)+2\Delta(G)-3.
\]
\end{cor}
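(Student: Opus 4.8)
The plan is to derive this corollary directly from Theorem~\ref{thmn1} by choosing the path $xyz$ cleverly so that the degree bound simplifies into $\delta(G)+2\Delta(G)-3$. The key observation is that in inequality $(1)$ of Theorem~\ref{thmn1}, namely $b_{dR}(G) \leq \deg_G(x)+\deg_G(y)+\deg_G(z)-3-|N(x)\cap N(y)|$, we are free to pick the three vertices of the path, and every vertex degree is at most $\Delta(G)$. To force the minimum degree $\delta(G)$ to appear, I would select the path so that one of its endpoints is a vertex of minimum degree.

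First I would fix a vertex $w$ of $G$ with $\deg_G(w)=\delta(G)$; since $G$ is connected of order $n\geq 3$, such a $w$ has at least one neighbor, and by connectivity I can find a path of length $2$ having $w$ as an endpoint. Concretely, let $w$ have a neighbor $u$, and since $n\geq 3$ and $G$ is connected, $u$ has a further neighbor $v\neq w$ (if $u$ had only $w$ as a neighbor, connectivity with $n\geq 3$ would still supply another vertex adjacent to the component, so some length-$2$ path through $w$ or with $w$ as endpoint exists). This gives a path $xyz$ in which the minimum-degree vertex $w$ plays the role of $x$ (or $z$).

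Next I would substitute into inequality $(1)$. With $x=w$ we have $\deg_G(x)=\delta(G)$, while $\deg_G(y)\leq \Delta(G)$ and $\deg_G(z)\leq \Delta(G)$. Since $|N(x)\cap N(y)|\geq 0$, dropping that nonnegative term only weakens the bound, so
\[
b_{dR}(G)\leq \deg_G(x)+\deg_G(y)+\deg_G(z)-3 \leq \delta(G)+2\Delta(G)-3,
\]
which is exactly the claimed inequality. This completes the argument.

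The only real subtlety, and the step I would be most careful about, is verifying that a suitable path of length $2$ with a minimum-degree vertex as an endpoint genuinely exists for every connected graph of order $n\geq 3$. This requires a short connectivity argument ruling out degenerate configurations; in a connected graph on at least three vertices one can always root a path of length two at a prescribed vertex by walking along edges, but I would state this carefully to be sure the minimum-degree vertex can be taken as an endpoint rather than the middle vertex (placing it in the middle would yield the weaker bound $2\Delta(G)+\delta(G)-3$ only if the two outer degrees were $\Delta$, so endpoint placement is what the stated constant demands). Everything else is a direct substitution into Theorem~\ref{thmn1}.
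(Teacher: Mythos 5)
Your proposal follows exactly the paper's own route: the paper obtains the corollary by applying Theorem~\ref{thmn1}(1) to a path $xyz$ in which \emph{one of} $x,y,z$ has minimum degree, bounding the two remaining degrees by $\Delta(G)$ and discarding the nonnegative term $|N(x)\cap N(y)|$. One correction, however: the ``subtlety'' you devote your final paragraph to is not a subtlety at all, because $2\Delta(G)+\delta(G)-3$ and $\delta(G)+2\Delta(G)-3$ are the same number. Placing the minimum-degree vertex in the middle of the path gives exactly the same bound as placing it at an end, since in either case the other two vertices each contribute at most $\Delta(G)$. This is precisely why the paper is content to require only that \emph{some} vertex of the path have minimum degree, and it also repairs the one loose end in your write-up: in your degenerate case (where the neighbor $u$ of the minimum-degree vertex $w$ has $w$ as its only neighbor), the length-$2$ path you actually produce is $uwv$ with $w$ in the \emph{middle}, contradicting your claim that $w$ ``plays the role of $x$ (or $z$)'' --- harmlessly so, precisely because position is irrelevant. (Alternatively, in that case $\deg_G(u)=1=\delta(G)$, so $u$ itself is a minimum-degree endpoint of the same path.) With that noted, existence is immediate: if $\delta(G)\geq 2$, a minimum-degree vertex $w$ and two of its neighbors form such a path; if $\delta(G)=1$, connectivity and $n\geq 3$ force the unique neighbor of $w$ to have a second neighbor.
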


\begin{cor}\label{corn1}
If a graph $G$ has a support vertex $v$ of degree at least three such that all of its neighbors except one are leaves, then $b_{dR}(G) \leq 2$.
\end{cor}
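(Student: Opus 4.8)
The plan is to prove the sharper statement that a single edge already suffices, i.e. $b_{dR}(G)\le 1$, which of course yields $b_{dR}(G)\le 2$. Since $\deg_G(v)\ge 3$ and every neighbor of $v$ except one is a leaf, $v$ has at least two leaf-neighbors; fix two of them, say $\ell_1$ and $\ell_2$. I would delete the single edge $v\ell_1$ and compare $\gamma_{dR}(G-v\ell_1)$ with $\gamma_{dR}(G)$. In $G-v\ell_1$ the vertex $\ell_1$ is isolated, and an isolated vertex must receive value $2$ in any DRDF (value $0$ or $1$ is impossible with no neighbors), contributing exactly $2$ and being independent of the rest; hence $\gamma_{dR}(G-v\ell_1)=\gamma_{dR}(G-\ell_1)+2$, where $G-\ell_1$ is the vertex-deleted graph. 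Thus it is enough to prove $\gamma_{dR}(G-\ell_1)\ge \gamma_{dR}(G)-1$, for then $\gamma_{dR}(G-v\ell_1)\ge \gamma_{dR}(G)+1>\gamma_{dR}(G)$.

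First I would record a normalization: because $v$ carries at least two leaves, $G$ admits a $\gamma_{dR}(G)$-function $f$ with $f(v)=3$ and all leaves of $v$ equal to $0$. Comparing the three possibilities for $f(v)$ on the set consisting of $v$ together with its leaves shows that $f(v)=3$ is cheapest there (cost $3$, versus at least $4$ when $f(v)=0$ and at least $6$ when $f(v)=2$, since each leaf then needs weight at least $2$), and raising $f(v)$ to $3$ only improves the domination of the non-leaf neighbor; so this replacement never increases the weight. This already gives the easy inequality $\gamma_{dR}(G-\ell_1)\le \gamma_{dR}(G)$, by restricting such an $f$ (which has $f(\ell_1)=0$) to $G-\ell_1$, the deletion of $\ell_1$ breaking no domination condition.

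For the lower bound $\gamma_{dR}(G-\ell_1)\ge \gamma_{dR}(G)-1$ I would start from a $\gamma_{dR}(G-\ell_1)$-function $g$ and extend it to $G$, using that $\ell_2$ is still a leaf of $v$ in $G-\ell_1$. If $g(v)=3$, then setting $g(\ell_1)=0$ yields a DRDF of $G$ of the same weight, so $\gamma_{dR}(G)\le \gamma_{dR}(G-\ell_1)$. Otherwise $g(v)\ne 3$, and since $\ell_2$'s only neighbor is $v$ we must have $g(\ell_2)\ge 2$; then the function agreeing with $g$ off $\{v,\ell_1,\ell_2\}$ and setting $v\mapsto 3$, $\ell_1\mapsto 0$, $\ell_2\mapsto 0$ is a DRDF of $G$ (raising $v$ to $3$ can only help other vertices, and lowering the leaf $\ell_2$ affects no vertex except $v$, which is now $3$) of weight at most $w(g)+3-g(\ell_2)\le w(g)+1$. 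In either case $\gamma_{dR}(G)\le \gamma_{dR}(G-\ell_1)+1$, as required.

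Combining the two estimates gives $\gamma_{dR}(G-v\ell_1)=\gamma_{dR}(G-\ell_1)+2\ge \gamma_{dR}(G)+1>\gamma_{dR}(G)$, so $\{v\ell_1\}$ is a double Roman bondage set and $b_{dR}(G)\le 1\le 2$. I expect the main obstacle to be the lower-bound step: one must argue cleanly that when $g(v)\ne 3$ the leaf $\ell_2$ is forced to carry weight at least $2$, and that moving this weight onto $v$ costs at most one extra unit, while checking that the modified assignment remains a valid DRDF at every vertex (the only values that change are an increase at $v$ and a decrease at the pendant $\ell_2$, so no domination condition elsewhere can be violated).
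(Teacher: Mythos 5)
Your proof is correct, and it proves something stronger than the corollary asks for: a single edge always suffices, i.e.\ $b_{dR}(G)=1$ whenever such a support vertex exists. The paper's own proof takes a different route, splitting on the degree of $v$. For $\deg_G(v)=3$ it applies Theorem~\ref{thmn1}(1) to the path $\ell_1v\ell_2$ through two leaf-neighbors, which only yields the two-edge bound; for $\deg_G(v)\geq 4$ it deletes one leaf edge $vv_1$ and argues -- essentially your case $g(v)=3$ -- that a $\gamma_{dR}$-function of $G-vv_1$ may be assumed to assign $3$ to $v$, this ``without loss of generality'' being legitimate there precisely because $v$ still has at least two leaves after the deletion, which is what forces the hypothesis $\deg_G(v)\geq 4$. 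Your additional contribution is the case $g(v)\neq 3$: the surviving leaf $\ell_2$ is then forced to carry weight at least $2$, and transferring that weight onto $v$ (raised to $3$, with both leaves dropped to $0$) costs at most one unit, giving $\gamma_{dR}(G-\ell_1)\geq\gamma_{dR}(G)-1$ and hence $\gamma_{dR}(G-v\ell_1)=\gamma_{dR}(G-\ell_1)+2>\gamma_{dR}(G)$ with no case distinction on the degree. This exchange step is exactly what lets you settle the degree-$3$ case with one edge, where the paper settles for two; it also shows the corollary (and its use in the tree problem the paper poses afterwards) could be sharpened to $b_{dR}(G)=1$.

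Two small points to tighten. First, the claim that $g(\ell_2)\geq 2$ whenever $g(v)\neq 3$ relies on the paper's standing convention that DRDFs take values in $\{0,2,3\}$ (no vertex assigned $1$); without it, $g(\ell_2)=1$ with $g(v)=2$ is possible, but your bound survives anyway, since then the modified function has weight $w(g)+3-2-1=w(g)\leq w(g)+1$ -- just state explicitly which convention you invoke. Second, in your normalization paragraph the count ``at least $6$ when $f(v)=2$'' also presupposes that convention (with value $1$ allowed, two leaves cost only $1$ each); this is harmless, because that paragraph only supports the upper bound $\gamma_{dR}(G-\ell_1)\leq\gamma_{dR}(G)$, which your final argument never uses and which you could simply omit.
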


\begin{proof}
Let $N(v)= \{ v_1,v_2,...,v_k \} $ such that $\deg_G(v_k) \geq
2$. Applying Theorem \ref{thmn1} (1) on the path $v_1vv_2$ in the
case $\deg_G(v)=k=3$, we obtain $b_{dR}(G) \leq 2$ immediately.
Assume now that $\deg_G(v)=k \geq 4$. Let $f=(V_0,V_2,V_3) $ be a
$\gamma_{dR}$-function of $G-vv_1$. It follows that $v_1 \in V_2$
and, without loss of generality, assume that $v \in V_3$.
Therefore $(V_0 \cup \{v_1 \},V_2\setminus \{ v_1 \},V_3)$ is a
DRDF on $G$ of weight $\gamma_{dR}(G-vv_1)-2$, and thus
$b_{dR}(G)=1$.
\end{proof}

\begin{cor}
For any tree $T$ with at least three vertices, $b_{dR}(T) \leq 2$.
\end{cor}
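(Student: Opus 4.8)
The plan is to reduce everything to the tools already developed by exploiting the structure of a longest path in $T$. First, if $T$ is a path $P_n$, then Theorem \ref{p_n} gives $b_{dR}(T)=1\le 2$ and we are done, so I would assume $T$ is not a path, which forces $\Delta(T)\ge 3$. I would then fix a longest path $v_0v_1\cdots v_k$ in $T$. A standard argument shows that $v_1$ is a support vertex and that every neighbor of $v_1$ other than $v_2$ is a leaf: if some neighbor $u\notin\{v_0,v_2\}$ of $v_1$ were not a leaf, then prepending a neighbor of $u$ would produce a longer path, a contradiction. The proof then splits according to $\deg_T(v_1)$.

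If $\deg_T(v_1)\ge 3$, then at least two neighbors of $v_1$ (all of them except $v_2$) are leaves, so $v_1$ is a support vertex of degree at least three all of whose neighbors except one are leaves. Corollary \ref{corn1} then yields $b_{dR}(T)\le 2$ immediately.

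The remaining, and main, case is $\deg_T(v_1)=2$, so that $v_1$ has exactly the two neighbors $v_0$ (a leaf) and $v_2$. Here I would exhibit the explicit edge set $B=\{v_0v_1,\,v_1v_2\}$ and prove $\gamma_{dR}(T-B)>\gamma_{dR}(T)$. Removing $B$ isolates both $v_0$ and $v_1$, so $T-B$ is the disjoint union of the two single-vertex graphs on $v_0,v_1$ together with the tree $T''=T-\{v_0,v_1\}$. Since an isolated vertex has double Roman domination number $2$ and $\gamma_{dR}$ is additive over connected components, $\gamma_{dR}(T-B)=\gamma_{dR}(T'')+4$. On the other hand, taking any $\gamma_{dR}(T'')$-function and extending it by setting $f(v_1)=3$ and $f(v_0)=0$ produces a DRDF of $T$, because the leaf $v_0$ is then dominated by the $3$-neighbor $v_1$ and no vertex of $T''$ loses a dominator when the pendant path is reattached; hence $\gamma_{dR}(T)\le \gamma_{dR}(T'')+3$. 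Combining the two estimates gives $\gamma_{dR}(T-B)=\gamma_{dR}(T'')+4>\gamma_{dR}(T'')+3\ge\gamma_{dR}(T)$, so $b_{dR}(T)\le|B|=2$.

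The delicate point, and where I expect the real work to lie, is precisely this degree-two case: the inequality is strict only because the ``$+4$'' gained by isolating two vertices beats the ``$+3$'' cost of the cheap extension. I would therefore be careful to justify both the exact value $\gamma_{dR}(T-B)=\gamma_{dR}(T'')+4$ (via additivity over components and the value $2$ for an isolated vertex) and the upper bound $\gamma_{dR}(T)\le\gamma_{dR}(T'')+3$, verifying explicitly that the extended assignment is a legitimate DRDF. The degree-at-least-three case, by contrast, is immediate from Corollary \ref{corn1}, so the entire difficulty is concentrated in handling a degree-two support vertex.
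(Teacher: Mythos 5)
Your proposal is correct and takes essentially the same approach as the paper: both work with the penultimate vertex of a longest path, dispatch the degree-at-least-three case via Corollary \ref{corn1}, and in the degree-two case delete the two edges isolating the leaf and its support vertex, winning by the same count that two isolated vertices cost $4$ while reattaching them costs only $3$. Your bookkeeping via additivity over components is equivalent to the paper's direct modification of a minimum DRDF of $T-B$ (replacing the values $2,2$ on the two isolated vertices by $0,3$), and your separate treatment of the path case is harmless but redundant.
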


\begin{proof}
If $T$ has a support vertex $v$ of degree at least three such
that all of its neighbors except one is leaf, then $b_{dR}(T)
\leq 2$ by Corollary \ref{corn1}. So assume that for any support
vertex $v$ either $\deg_T(v)=2$ or $v$ has at least two neighbors
which are not leaves. Let $P=v_1,v_2,...,v_k$ be a longest path
of $T$. By the assumption, $\deg_T(v_2)=2$. If $g$ is a
$\gamma_{dR}(T- \{v_1v_2,v_2v_3 \})$-function, then
$g(v_1)=g(v_2)=2$. Now if we let $h(v_1)=0,h(v_2)=3$ and $h=g$
for other vertices of $T$, then $h$ is a DRDF on $T$ of weight
less than $w(g)$ and so $\gamma_{dR}(T- \{v_1v_2,v_2v_3 \}) >
\gamma_{dR}(T)$. Thus the proof is complete.
\end{proof}

{\bf Problem:} Characterize trees $T$ with $b_{dR}(T) =1$ or $b_{dR}(T) =2$.

We next improve Theorem \ref{thmn1}.
\begin{thm}\label{3.5}
If $G$ is a connected graph of order $n\geq 2$ and  $uv \in E(G)$ then
\[
b_{dR}(G)\leq \deg_G(u) + \deg_G(v)-1-|N(u)\cap N(v)|.
\]
\end{thm}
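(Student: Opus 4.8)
The plan is to exhibit an explicit edge set $B$ with $|B| = \deg_G(u)+\deg_G(v)-1-|N(u)\cap N(v)|$ for which $\gamma_{dR}(G-B) > \gamma_{dR}(G)$; this immediately gives the stated bound on $b_{dR}(G)$. Write $c = |N(u)\cap N(v)|$ and let $w_1,\dots,w_c$ be the common neighbors of $u$ and $v$. Following the pattern of the proof of Theorem~\ref{thmn1}, I would not argue the inequality directly, but instead start from a \emph{minimum} DRDF $f=(V_0,V_2,V_3)$ of $H:=G-B$ and modify it into a DRDF $g$ of $G$ with $w(g) \le w(f)-1$. Since $w(f)=\gamma_{dR}(H)$, this yields $\gamma_{dR}(G)\le w(g) < \gamma_{dR}(H)=\gamma_{dR}(G-B)$, which is exactly what is needed.

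For the construction, let $B$ consist of \emph{all} edges incident with $u$ together with all edges incident with $v$ other than $uv$ and the $c$ edges $vw_1,\dots,vw_c$. A direct count gives $|B| = \deg_G(u) + (\deg_G(v)-1-c)$, the asserted quantity, since the $\deg_G(v)-1-c$ edges deleted at $v$ are disjoint from the $\deg_G(u)$ edges deleted at $u$. In $H$ the vertex $u$ is isolated, $v$ is adjacent exactly to $w_1,\dots,w_c$, and each $w_i$ retains its edge to $v$ and all of its edges to vertices outside $\{u,v\}$. Because $f$ is minimum and $u$ is isolated, $f(u)=2$.

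The heart of the argument is a short case analysis on the set $S=\{v,w_1,\dots,w_c\}\subseteq N_G(u)$. If some vertex of $S$ lies in $V_3$, or at least two vertices of $S$ lie in $V_2$, then in $G$ the vertex $u$ is double Roman dominated by $S$, so putting $g(u)=0$ and $g=f$ elsewhere gives a DRDF of $G$ with $w(g)=w(f)-2$. The only remaining possibility is that $S$ contains no vertex of value $3$ and at most one vertex of value $2$; here I would first observe that $f(v)\neq 0$ (otherwise $v$, whose only $H$-neighbors are the $w_i$, would be undominated, since at most one $w_i$ has value $2$ and none has value $3$), forcing $f(v)=2$ and $f(w_i)=0$ for every $i$. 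In this case set $g(u)=0$, $g(v)=3$, and $g=f$ elsewhere: then $u$ is dominated by $v$ through the edge $uv\in E(G)$, each $w_i$ is dominated by the value-$3$ vertex $v$ through the retained edge $vw_i$, and $w(g)=w(f)-4+3=w(f)-1$.

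The step I expect to require the most care is verifying that these reassignments really produce DRDFs of $G$ and not merely of $H$ — that is, checking that lowering $f(u)$ from $2$ to $0$, and in the last case raising $f(v)$ to $3$, breaks no vertex's condition. The point is that in $H$ no vertex relies on $u$ (it is isolated) and the only vertices relying on $v$ are the $w_i$; every other vertex has its DRDF condition witnessed inside $N_H[\cdot]\subseteq N_G[\cdot]$ using values that $g$ leaves unchanged, and the new neighbors acquired in passing from $H$ to $G$ can only help. Once this bookkeeping is in place, all cases deliver a DRDF of $G$ of weight at most $w(f)-1$, completing the proof.
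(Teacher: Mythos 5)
Your proof is correct and takes essentially the same approach as the paper: the edge set $B$ you delete is exactly the paper's set $U=\{tu \mid t\in N(u)\}\cup\bigl(\{sv \mid s\in N(v)\}\setminus\{rv \mid r\in N(u)\cap N(v)\}\bigr)$, which isolates $u$ and leaves $v$ adjacent only to the common neighbors. The paper merely asserts that $\gamma_{dR}(G-U)>\gamma_{dR}(G)$ ``is not hard to see,'' and your case analysis on $S=\{v,w_1,\dots,w_c\}$ supplies precisely the verification it omits.
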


\begin{proof}
It is not hard to see that $\gamma_{dR}(G- U)>\gamma_{dR}(G)$,
where $U=(\{tu|t\in N(u)\}\cup \{sv|s\in N(v)\}\setminus
\{rv|r\in N(u)\cap N(v)\})$. Note that $|U|=\deg_G(u) +
\deg_G(v)-1-|N(u)\cap N(v)|$.
\end{proof}

\begin{cor}\label{corn2}
If $G$ is a connected graph, then $b_{dR}(G)\leq \Delta(G) + \delta(G) -1$.
\end{cor}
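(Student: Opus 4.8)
The plan is to derive this bound directly from Theorem~\ref{3.5} by choosing the edge $uv$ cleverly. The right-hand side of Theorem~\ref{3.5} is
\[
\deg_G(u)+\deg_G(v)-1-|N(u)\cap N(v)|,
\]
so I want to pick $u$ and $v$ so that $\deg_G(u)+\deg_G(v)$ is as small as I can guarantee while still having an admissible edge. The natural choice is to take $u$ to be a vertex of minimum degree, so that $\deg_G(u)=\delta(G)$, and then to take $v$ to be any neighbor of $u$.

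First I would fix a vertex $u\in V(G)$ with $\deg_G(u)=\delta(G)$. Since $G$ is connected and has at least two vertices, $u$ is not isolated, so it has at least one neighbor; choosing any such neighbor $v$ gives an edge $uv\in E(G)$ to which Theorem~\ref{3.5} applies. Substituting into the theorem yields
\[
b_{dR}(G)\leq \delta(G)+\deg_G(v)-1-|N(u)\cap N(v)|.
\]
Finally I would bound the two remaining terms from above: since $\deg_G(v)\leq \Delta(G)$ and $|N(u)\cap N(v)|\geq 0$, the right-hand side is at most $\delta(G)+\Delta(G)-1$, which is exactly the claimed inequality.

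There is essentially no hard step here; the only point to verify is the existence of a neighbor of $u$, which is immediate from connectivity once $G$ has at least two vertices (so that $\delta(G)\geq 1$). The common-neighborhood term $|N(u)\cap N(v)|$ and any slack in the estimate $\deg_G(v)\leq\Delta(G)$ only strengthen the bound, so discarding them is harmless for the stated conclusion.
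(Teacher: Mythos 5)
Your proof is correct and is exactly the derivation the paper intends: the corollary is stated as an immediate consequence of Theorem~\ref{3.5}, obtained by applying it to an edge $uv$ with $\deg_G(u)=\delta(G)$ and discarding the nonnegative term $|N(u)\cap N(v)|$. Nothing further is needed.
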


Note that Corollary \ref{corn2} improves Corollary \ref{corn0}. By Corollary \ref{10} we obtain the following improvement of Corollary \ref{corn2} if the graph is planar.

\begin{thm}\label{77}
If $G$ is a connected graph  and  $uwv$ is a path of $G$, then
$b_{dR}(G)\leq \deg_G(u) + \deg_G(v)-1$.
\end{thm}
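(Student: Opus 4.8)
The plan is to exhibit an explicit edge set $B$ with $|B|\le \deg_G(u)+\deg_G(v)-1$ whose removal strictly raises the double Roman domination number, and then to verify the increase by the usual device of converting an optimal DRDF of $G-B$ into a strictly lighter DRDF of $G$. Concretely, I would take $B$ to consist of every edge incident with $u$ other than $uw$, together with every edge incident with $v$ (including $vw$). In $G':=G-B$ the vertex $v$ is then isolated and $u$ is a leaf whose only neighbour is $w$. Counting edges, the first family contributes $\deg_G(u)-1$ edges and the second contributes $\deg_G(v)$ edges, and the only edge these families can share is $uv$; hence $|B|$ equals $\deg_G(u)+\deg_G(v)-1$ when $uv\notin E(G)$ and $\deg_G(u)+\deg_G(v)-2$ when $uv\in E(G)$, so in either case $|B|\le\deg_G(u)+\deg_G(v)-1$. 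Note that no edge incident with $w$ other than $vw$ is touched, which is precisely why $\deg_G(w)$ does not enter the bound.

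Next I would fix a $\gamma_{dR}(G')$-function $f=(V_0,V_2,V_3)$ with values in $\{0,2,3\}$, available by the reduction recorded in the Introduction. Since $v$ is isolated in $G'$, it cannot receive value $0$, so $f(v)\ge 2$. The goal is to construct a DRDF $h$ on $G$ with $w(h)<w(f)$, which forces $\gamma_{dR}(G)\le w(h)<w(f)=\gamma_{dR}(G')$ and hence $b_{dR}(G)\le |B|$.

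I would split into two cases according to $f(w)$. If $f(w)=3$, let $h$ agree with $f$ except that $h(v)=0$; since $v$ is adjacent to $w$ in $G$ and $f(w)=3$, the vertex $v$ is correctly dominated, and $w(h)=w(f)-f(v)<w(f)$. If $f(w)\ne 3$, then $u$, whose only neighbour in $G'$ is $w$, cannot take value $0$, so $f(u)\ge 2$; now define $h$ to agree with $f$ except that $h(w)=3$ and $h(u)=h(v)=0$. Here $u$ and $v$ are both dominated by $w$ in $G$, and the weight drops from $f(w)+f(u)+f(v)\ge 0+2+2=4$ to $3$, so again $w(h)<w(f)$.

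The step I expect to be delicate is checking that $h$ is genuinely a DRDF on all of $G$. The only vertices whose value is lowered are $u$ and $v$, and the only vertex whose value is raised is $w$. The crucial observation is that in $G'$ the vertex $v$ has no neighbours and $u$ has only the neighbour $w$, so no vertex $t\ne w$ relies on $f(u)$ or $f(v)$ for its domination in $G'$; any such $t$ with $h(t)=f(t)=0$ is dominated by its $G'$-neighbours, whose $h$-values are unchanged or, in the case of $w$, increased, and these remain neighbours of $t$ in $G$. Thus every domination condition survives the passage from $G'$ to $G$, $h$ is a valid DRDF on $G$, and the strict weight drop completes the argument.
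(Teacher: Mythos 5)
Your proof is correct and is essentially the paper's own argument: the paper deletes all edges at $u$ and all edges at $v$ except $vw$ (your set $B$ is the mirror image, isolating $v$ and leaving $u$ pendant on $w$), and then asserts without detail that the double Roman domination number increases. Your case analysis on $f(w)$ simply supplies the verification the paper dismisses as ``not hard to see,'' and it is carried out correctly.
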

\begin{proof}
It is not hard to see that $\gamma_{dR}(G- U)>\gamma_{dR}(G)$,
where $U=\{tu|t\in N(u)\}\cup \{sv|s\in (N(v)\setminus \{w\})\}$.
Since $|U|=\deg_G(u) + \deg_G(v)-1$, the proof is complete.
\end{proof}
\begin{rem}
If $G$ is a planar graph and $g\geq 4$, then $\delta(G)\leq 3$. Also
$b_{dR}(G)\leq \Delta(G) + \delta(G) -1\leq \Delta(G) +2$. If $G$ is a
planar graph and $g\geq 6$, then $\delta(G)\leq 2$. Also
$b_{dR}(G)\leq \Delta(G) + \delta(G) -1\leq \Delta(G) +1$.
\end{rem}
\begin{thm}\label{3.9}
If $G$ is a connected planar graph of order $n\geq 2$ without
vertices of degree five, then $b_{dR}(G)\leq 7$.
\end{thm}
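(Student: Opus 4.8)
The plan is to argue by contradiction, converting the claim into the existence of a \emph{light} edge or a \emph{light} path of length two, and then to contradict planarity. Suppose $b_{dR}(G) \geq 8$. By Theorem \ref{3.5}, every edge $uv$ of $G$ satisfies $\deg_G(u) + \deg_G(v) - 1 - |N(u) \cap N(v)| \geq 8$, and by Theorem \ref{77}, every path $uwv$ of length two satisfies $\deg_G(u) + \deg_G(v) - 1 \geq 8$. Call a vertex \emph{small} if its degree is at most $4$ and \emph{large} if its degree is at least $6$. By Corollary \ref{10} we have $\delta(G) \leq 5$, and since $G$ has no vertex of degree $5$, every vertex is small or large and a small vertex exists.

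The two degree-sum inequalities immediately separate the small vertices. If two small vertices $u,v$ were adjacent, then $\deg_G(u)+\deg_G(v)-1-|N(u)\cap N(v)| \leq 4+4-1 = 7 < 8$, contradicting the edge inequality; if two small vertices $u,v$ had a common neighbor $w$, then the path $uwv$ would give $\deg_G(u)+\deg_G(v)-1 \leq 7 < 8$, contradicting the path inequality. Hence in the hypothetical $G$ the small vertices lie pairwise at distance at least $3$; in particular the neighborhood of every small vertex consists only of large vertices, distinct small vertices have disjoint neighborhoods, and (again by the path inequality) every large vertex is adjacent to at most one small vertex. This is the single configuration that must be excluded, and it is here that the absence of degree-$5$ vertices is essential: were degree-$5$ vertices allowed, a small vertex could be paired with a degree-$5$ partner and the inequalities would only force the weaker bound $b_{dR}(G) \leq 8$.

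To finish I would run a discharging argument on a fixed planar embedding of $G$. Assign to each vertex $x$ the charge $\mu(x) = \deg_G(x) - 6$ and to each face $F$ the charge $\mu(F) = 2\deg(F) - 6$; the Euler-formula counting underlying Theorem \ref{9} and Corollary \ref{15} gives total charge exactly $-12$. Every face of degree at least $3$ and every large vertex has nonnegative charge, so all negative charge sits on the small vertices, each carrying charge between $-5$ and $-2$. The rules would move charge onto each small vertex from the large vertices and faces surrounding it, using the structural facts just established (disjoint, entirely-large neighborhoods and at most one small vertex per large vertex) to guarantee that after redistribution every vertex and face is nonnegative; this forces total charge $\geq 0$, contradicting $-12$.

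I expect the design of the discharging rules and the accompanying face bookkeeping to be the main obstacle, for two reasons. First, a purely local estimate around a single minimum-degree vertex cannot succeed, because the neighbors of a small vertex may have arbitrarily large degree (so $\Delta(G)$ is uncontrolled), and the contradiction must instead be extracted from the global sparsity of planar graphs. Second, the delicate point is that a large neighbor of exactly degree $6$ has no surplus charge to donate, so charge must be routed through the incident faces and through higher-degree large vertices; the rules have to be calibrated so that each small vertex recovers its full deficit, and it is precisely the exclusion of degree-$5$ vertices that makes this balance close at the threshold $7$ rather than $8$.
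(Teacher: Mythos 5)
Your opening coincides with the paper's: assuming $b_{dR}(G)\geq 8$, Theorems \ref{3.5} and \ref{77} force the vertices of degree at most $4$ to be pairwise at distance at least $3$, so their neighborhoods are disjoint, consist of vertices of degree at least $6$, and no vertex has two small neighbors. The gap is in what you do next, and it is fatal rather than technical: the discharging argument you defer is not "the main obstacle," it cannot exist, because the structural facts you have collected are consistent with planarity. Concretely, take the octahedron and subdivide every face into nine triangles (the standard $k=3$ refinement of a triangulation). The result is a planar triangulation in which the six original vertices still have degree $4$, every other vertex has degree $6$, the six degree-$4$ vertices are pairwise at distance $3$ (hence no vertex has two small neighbors), and there is no vertex of degree $5$. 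This graph satisfies every property in your second paragraph, and under your charge assignment its total charge is exactly $-12$: six vertices of charge $-2$, and all other vertices and all faces of charge $0$. Since discharging rules only move charge around, no choice of rules based solely on those facts can make every vertex and face nonnegative.

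The example also shows that the information you discarded is precisely the essential one: the term $|N(u)\cap N(v)|$ in Theorem \ref{3.5}. For a small vertex $x$ and a neighbor $y$, the assumption $b_{dR}(G)\geq 8$ gives $\deg_G(y)\geq 9-\deg_G(x)+|N_G(x)\cap N_G(y)|$; in the subdivided octahedron every edge from a degree-$4$ vertex lies in two triangles, so this demands $\deg_G(y)\geq 7$, which fails -- and that is the only one of the inequalities that fails there. The paper's proof exploits exactly this dichotomy: either $y$ has enough common neighbors with $x$ to force $\deg_G(y)\geq 7$ outright, or it has few, in which case Lemma \ref{16} allows one to add edges inside $N(x)$, preserving planarity and making the induced neighborhood $2$-connected, so that $y$ gains enough new edges to have degree at least $7$ in the augmented graph $H_k$. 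Deleting the small vertices from $H_k$ (each remaining vertex loses at most one neighbor, by the distance-$3$ separation) then produces a planar graph of minimum degree at least $6$, contradicting Corollary \ref{10}. To repair your proof you must reinstate the common-neighborhood term and either invoke Lemma \ref{16} as the paper does, or encode triangle/common-neighbor information into the discharging rules; the facts in your second paragraph alone are provably insufficient.
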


\begin{proof}
If $A=\{v\in V(G)|\deg_G(v)\leq 4\}=\{v_1, v_2, \ldots ,v_k\}$,
then Corollary \ref{10} and the hypothesis imply that $A\neq
\emptyset$. Suppose on the contrary that $b_{dR}(G)\geq 8$. In
view of Theorem \ref{77} and the assumption, we deduce that
$d_G(x,y)\geq 3$ for any two distinct vertices $x,y \in A$. Using
Lemma \ref{16}, we define $H_0=G$ and $H_i=H_{i-1}+S_i$ for
$1\leq i\leq k$, where $S_i$ is a subset of $E_{v_i}=\{xy|x,y \in
N(v_i), xy\notin E(H_{i-1})\}$ such that $H_{i-1}+S_i$ is still a
planar graph and $H_i[N(v_i)]$ is $2$-connected when
$\deg_G(v_i)\geq 3$.\\
Now let $x\in A$ and $y\in N_G(x)$. If $\deg_G(x)\leq 2$, then it
follows from the assumption and Theorem \ref{77} that
$\deg_G(y)\geq 7$ and so $\deg_{H_k}(y)\geq 7$. Assume next that
$\deg_G(x)=3$. By the assumption and Theorem \ref{3.5}, we obtain
\[
8\leq \deg_G(x)+\deg_G(y)-|N_G(x)\cap N_G(y)|-1=\deg_G(y)-|N_G(x)\cap N_G(y)|+2.
\]

If $|N_G(x) \cap N_G(y)|\geq 1$, then we deduce that
$\deg_{H_k}(y) \geq \deg_G(y)\geq 7$. In the remaining case
$N_G(x) \cap N_G(y)= \emptyset$, inequality chain leads to
$\deg_G(y)\geq 6$ and thus $\deg_{H_k}(y)\geq 8$. Finally assume
that $\deg_G(x)=4$. By the assumption and Theorem \ref{3.5}, we
obtain
\[
8\leq b_{dR}(G)\leq \deg_G(x)+\deg_G(y)-|N_G(x) \cap N_G(y)|-1=\deg_G(y)-|N_G(x) \cap N_G(y)|+3.
\]
If $|N_G(x) \cap N_G(y)|\geq 2$, then we deduce that $\deg_{H_k}(y)\geq \deg_G(y)\geq 7$. If $|N_G(x) \cap N_G(y)|=1$, then $\deg_G(y)\geq 6$ and so $\deg_{H_k}(y)\geq 7$. In the remaining case $N_G(x) \cap N_G(y)=\emptyset$, we observe that $\deg_G(y)\geq 5$ and thus $\deg_{H_k}(y)\geq 7$.\\
 According to Lemma \ref{16}, the graph $H_k$ is planar. However, since $d_G(x,y)\geq 3$ for any two distinct vertices $x, y\in A$, we observe that $H_k-A$ is a planar graph with minimum degree at
 least 6. This is a contradiction to Corollary \ref{10}, and the proof is complete.
\end{proof}

\begin{thm}
If $G$ is a connected planar graph of order $n\geq 2$, then
$b_{dR}(G)\leq 8$.
\end{thm}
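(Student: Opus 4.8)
The plan is to reduce immediately to the presence of a degree-five vertex and then run a counting argument modelled on the proof of Theorem \ref{3.9}, but with every threshold raised by one. First I would observe that by Corollary \ref{10} we always have $\delta(G)\le 5$. If $G$ has no vertex of degree exactly five, then Theorem \ref{3.9} already gives $b_{dR}(G)\le 7\le 8$ and there is nothing to prove. Hence I may assume $G$ contains a vertex of degree five, and I argue by contradiction, supposing $b_{dR}(G)\ge 9$.

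Set $A=\{v\in V(G):\deg_G(v)\le 5\}$; by the above, $A\neq\emptyset$. The first block of work is to record the local constraints forced by $b_{dR}(G)\ge 9$. Applying Theorem \ref{77} to a path $xwy$ and Theorem \ref{3.5} to an edge $xy$, exactly as in Theorem \ref{3.9}, I get two things: if $x\in A$ with $\deg_G(x)\le 4$ then $d_G(x,y)\ge 3$ for every other $y\in A$ (because $4+5-1=8<9$); and if two vertices of $A$ lie at distance at most two then both have degree exactly five, while two adjacent degree-five vertices of $A$ have no common neighbour (since $5+5-1-|N(x)\cap N(y)|\ge 9$ forces $|N(x)\cap N(y)|=0$). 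Thus the vertices of $A$ of degree at most four remain isolated from $A$ in the sense of Theorem \ref{3.9}, and only the degree-five vertices may cluster.

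Next I would invoke Lemma \ref{16} repeatedly to build a planar supergraph $H_k=G+\bigcup_i S_i$ in which $H_k[N_G(v_i)]$ is $2$-connected for every $v_i\in A$ with $\deg_G(v_i)\ge 3$; crucially the added edges lie inside the neighbourhoods of vertices of $A$. The goal is then to show that the planar graph $H_k-A$ has minimum degree at least six, which contradicts Corollary \ref{10}. For a surviving vertex $y\notin A$ (so $\deg_G(y)\ge 6$) adjacent to a vertex $x\in A$ of degree at most four, the estimates of Theorem \ref{3.9} carry over after replacing $8$ by $9$: Theorem \ref{3.5} forces $\deg_G(y)\ge 10-\deg_G(x)+|N(x)\cap N(y)|$, and the $2$-connectivity of $H_k[N(x)]$ supplies the missing edges, so that after deleting the single edge $xy$ we still have $\deg_{H_k-A}(y)\ge 6$ (here the distance-$\ge 3$ property guarantees $y$ has only this one neighbour in $A$).

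The main obstacle is the degree-five part of $A$, which has no analogue in Theorem \ref{3.9}. A surviving vertex $y$ may now be adjacent to several degree-five vertices $x_1,\dots,x_m$ of $A$, and deleting $A$ strips $m$ edges from $y$ at once; moreover these $x_i$ are pairwise non-adjacent (they share the common neighbour $y$), so they really can coexist. The heart of the argument is to show that these losses are compensated: for each $x_i$, Theorem \ref{3.5} gives $\deg_G(y)\ge 5+|N(x_i)\cap N(y)|$, while making $H_k[N(x_i)]$ $2$-connected adds at least $2-|N(x_i)\cap N(y)|$ edges at $y$ inside $N(x_i)$. I would use planarity of $H_k$ to control the overlaps among the sets $N(x_i)$ and to check that enough of these augmentation edges run to vertices outside $A$, so that $\deg_{H_k-A}(y)\ge \deg_G(y)-m+(\text{gained edges})\ge 6$. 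Establishing this compensation bound cleanly---bounding $m$ and the mutual intersections $N(x_i)\cap N(x_j)$ via planarity---is where the real work lies; once it is in place, every vertex of $H_k-A$ has degree at least six, contradicting Corollary \ref{10} and completing the proof.
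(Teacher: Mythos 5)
Your reduction to the case $A_5\neq\emptyset$ and your local estimates (the distance-$\geq 3$ condition for vertices of degree at most four, and $\deg_G(y)\ge 10-\deg_G(x)+|N_G(x)\cap N_G(y)|$ from Theorem \ref{3.5}) match the paper's opening moves. The gap is precisely in the step you flag as ``where the real work lies'': deleting \emph{all} of $A$, including the degree-five vertices, and compensating the edges lost at a surviving vertex $y$ by augmentation edges from Lemma \ref{16}. This compensation cannot be established. First, Lemma \ref{16} only asserts that \emph{some} set $S\subseteq E_v$ of added edges makes the neighbourhood $2$-connected; it gives no control over which vertices of $N(v)$ the new edges at $y$ run to. Second, and fatally, those edges can run into $A$ itself: the other neighbours of a degree-five vertex $x_i$ may all be degree-five vertices (your constraints then only force $N(x_i)$ to be an independent set, via the empty common neighbourhood of adjacent degree-five pairs; they do not force high degree). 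Concretely, take $y$ of degree six adjacent to two degree-five vertices $x_1,x_2$ with $N_G(x_i)\cap N_G(y)=\emptyset$ and with all of $N_G(x_i)\setminus\{y\}$ of degree five. Every constraint you derived from $b_{dR}(G)\ge 9$ is satisfied (Theorem \ref{77} on the path $x_1yx_2$ gives only $b_{dR}(G)\le 5+5-1=9$, which is consistent), yet after deleting $A$ the vertex $y$ has degree four in $H_k-A$ no matter which augmentation edges were chosen, so no contradiction with Corollary \ref{10} can be reached.

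The paper avoids this obstruction by never deleting the degree-five vertices. It takes a \emph{maximum independent set} $I\subseteq A_5$, removes only $A_3\cup A_4$, and shows (a computation like your compensation estimate, but now sound because $I$ is independent and far from $A_3\cup A_4$) that every vertex of $N_G(I)$ --- including the vertices of $A_5\setminus I$, which lie in $N(I)$ by maximality of $I$ --- has degree at least $7$ in the augmented graph $H_k$. The resulting planar graph $G^*=H_k-A_4$ has minimum degree $5$, with the degree-five vertices forming the independent set $I$ and all of their neighbours of degree at least $7$. The contradiction then comes not from the minimum-degree bound but from edge counting: the bipartite planar bound (Theorem \ref{9} with $g\ge 4$) applied to the $I$--$N(I)$ incidence graph gives $5|I|\le 2|I|+2|N(I)|-4$, hence $|N(I)|\ge \frac{3}{2}|I|+2$, and summing degrees over $G^*$ yields $|E(G^*)|>3|V(G^*)|-6$, contradicting Corollary \ref{15}. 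Keeping an independent set of degree-five vertices and trading the minimum-degree contradiction for an edge-count contradiction is the missing idea your proposal needs.
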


\begin{proof}Let
\begin{align*}
A_3=& \{v\in V(G)|\deg_G(v)\leq 3\},\\
  A_4=& \{v\in V(G)|\deg_G(v)=4\},\\
     A_5=& \{v\in V(G)|\deg_G(v)=5\}.
   \end{align*}

If $A_5=\emptyset$, then Theorem \ref{3.9} implies the desired
result. Thus we can assume $A_5\neq \emptyset$. Suppose on the
contrary that $b_{dR}(G)\geq 9$. In view of Theorem \ref{77} and
the assumption, if $x\in A_3\cup A_4$ and $ y \in A_3\cup A_4\cup
A_5$, then  $d_G(x,y)\geq 3$. In addition, if $x\in A_3$, then
$deg(y)\geq 7$ for all $y\in N_G(x)$, by theorem \ref{3.5}.

Let $I$ be a maximum independent subset of $A_5$. Then
$A_5\subseteq I \cup N(I)$ and  $N(A_4)\cap N(I)=\emptyset$. Now
let $A_4 \cup I=\{v_1, v_2,\ldots , v_k\}$ and  $H=G- A_3$.
Applying Lemma \ref{16}, we define $H_0=H$ and $H_i=H_{i-1}+S_i$
for $1\leq i\leq k$, where $S_i$ is a subset of $E_{v_i}=\{xy|x,y
\in N(v_i), xy\notin E(H_{i-1})\}$ such that $H_{i-1}+S_i$ is
still a planar graph and $H_i[N(v_i)]$ is $2$-connected. We
proceed with the following claims.

{\bf Claim 1.} If $A_4\neq \emptyset$, then $\deg_{H_k}(y)\geq 7$
for each vertex $y\in N_G(A_4)$. 

To see this, assume that $A_4\neq \emptyset$. Let $x \in A_4$ and
$y \in N_G(x)$. Then Theorem \ref{3.5} and the assumption imply
that
\[
9\leq b_{dR}(G)\leq \deg_G(x)+\deg_G(y)-|N_G(x) \cap N_G(y)|-1=\deg_G(y)-|N_G(x) \cap N_G(y)|+3.
\]
If $|N_G(x) \cap N_G(y)|\geq 1$, then we deduce that
$\deg_{H_k}(y)\geq \deg_G(y)\geq 7$. If  $N_G(x) \cap N_G(y)=
\emptyset$, then the inequality chain leads to $\deg_G(y)\geq 6$
and thus $\deg_{H_k}(y)\geq 8$.

{\bf Claim 2.} $\deg_{H_k}(y)\geq 7$ for each vertex $y\in
N_G(I)$.

To see this, let $x \in I$ and $y \in N_G(x)$. Then Theorem
\ref{3.5} and the assumption imply that
\[
9\leq b_{dR}(G)\leq \deg_G(x)+\deg_G(y)-|N_G(x) \cap N_G(y)|-1=\deg_G(y)-|N_G(x) \cap N_G(y)|+4.
\]
If $|N_G(x)\cap N_G(y)|\geq 2$, then we deduce that $\deg_{H_k}(y)\geq \deg_G(y)\geq 7$. If $|N_G(x)\cap N_G(y)|=1$, then $\deg_G(y)\geq 6$ and so $\deg_{H_k}(y)\geq 7$. In the remaining case $N_G(x)\cap N_G(y)=\emptyset$, we observe that $\deg_G(y)\geq 5$ and thus $\deg_{H_k}(y)\geq 7$.\\

Combining Claims 1 and 2, we find that $G^*=H_k-A_4$ is a planar graph with the following properties. The minimum degree of $G^*$ is 5, $I=\{v\in V(G^*)|\deg_{G^*}(v)=5\}$ is an independent 
set in $G^*$ and $\deg_{G^*}(v)\geq 7$ for each vertex $v\in N_{G^*}(I)=N_G(I).$ Let $B$ be the bipartite graph with the bipartition $I$ and $N(I)$ and the edge set $\{uv\in E(G^*)|u\in I, v\in N(I)\}$. Then $B$ is a bipartite planar graph with exactly $5|I|$ edges. Applying 
Theorem \ref{9} with girth $g\geq 4$, we obtain $5|I|\leq 2|I|+2|N(I)|-4$(note that this bound remains valid if $g=\infty$, that means that $B$ is a forest) and therefore $|N(I)|\geq \frac{3}{2}|I|+2$.
Altogether we find

\begin{align*}
  |E(G^*)|&=\frac{1}{2}\sum_{v\in V(G^*)}\deg_{G^*}(v)\\
  &\geq \frac{1}{2}(5|I|+7|N(I)|+6(|V(G^*)|-|I|-|N(I)|))\\
  &=3|V(G^*)|+\frac{1}{2}|N(I)|-\frac{1}{2}|I|\\
  &\geq 3|V(G^*)|+\frac{1}{4}|I|+I>3|V(G^*)|-6
\end{align*}
  This is a contradiction to Corollary \ref{15}, and the proof is complete.
  \end{proof}

\section{Complexity of double Roman bondage number}

In this section, we study the NP-hardness of the double Roman bondage number problem. The decision problem of the double Roman bondage number problem is stated as follows.

{\bf Double Roman bondage number problem (DRBN)}:\\
\textbf{Instance}: A nonempty graph $G$ and a positive integer $k$.\\
\textbf{Question}: Is $b_{dR}(G) \leq k$?\\

We will prove the NP-hardness of the double Roman bondage number problem by transforming from a known NP-complete problem, namely 3-satisfiability problem that is known to be NP-complete \cite{gj}.


At first we recall some terms for the $3$SAT problem. Let $U$ be
a set of Boolean variables. If $u$ is a variable in $U$, then $u$
and $\overline{u}$ are \textit{literals} over $U$. A
\textit{truth assignment} for $U$ is a mapping $t : U \rightarrow
\{T, F\}$.  We call $u$ true under $t$ if $t(u) = T$, otherwise
it is called false under $t$. The literal $u$ is true under $t$
if and only if the variable $u$ is true under $t$; the literal
$\overline{u}$ is true if and only if the variable $u$ is false.
A \textit{clause} over $U$ is a set of literals over $U$. A
clause represents the disjunction of literals and is satisfied by
a truth assignment if and only if at least one of its members is
true under that assignment. A collection $\xi$ of clauses over
$U$ is \textit{satisfiable} if and only if there exists some
truth assignment for $U$ that simultaneously satisfies all the
clauses in $\xi$. Such a truth assignment is called a
\textit{satisfying truth assignment} for $\xi$. The 3-SAT is
specified as follows.
\\

{\bf $3$-satisfiablity problem ($3$-SAT)}:\\
{\bf Instance}: A collection $\xi = \{D_1,D_2, \ldots ,D_m\}$ of clauses over a finite set
$U$ of variables such that $|D_j | = 3$ for $j = 1, 2, \ldots ,m$.\\
{\bf Question}: Is there a truth assignment for $U$ that satisfies all the clauses
in $\xi$?

\begin{thm}
DRBN is NP-hard even for bipartite graphs.
\end{thm}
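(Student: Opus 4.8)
The plan is to prove NP-hardness by a polynomial reduction from 3-SAT, following the standard template used for bondage-type problems (as in \cite{hr} for ordinary bondage). Given an instance $\xi = \{D_1,\dots,D_m\}$ over variables $U = \{u_1,\dots,u_n\}$, I would construct in polynomial time a bipartite graph $G_\xi$ together with an integer $k$ so that $\xi$ is satisfiable if and only if $b_{dR}(G_\xi) \le k$. The core idea is to build variable gadgets and clause gadgets and to wire them so that the value of $\gamma_{dR}$ is controlled by a collection of ``flexible'' edges: the double Roman domination number of $G_\xi$ should equal some baseline value $W$, and one can force $\gamma_{dR}(G_\xi - B) > W$ with a small edge set $B$ (of size at most $k$) precisely when a satisfying assignment exists. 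For each variable $u_i$ I would introduce two literal vertices representing $u_i$ and $\overline{u_i}$; for each clause $D_j$ I would introduce a clause vertex adjacent (through subdivision vertices, to keep the graph bipartite) to the three literals it contains. Additional pendant or near-pendant structures attached to the literal vertices serve to pin their $f$-values in every $\gamma_{dR}$-function, so that an optimal DRDF encodes a truth assignment.

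First I would fix the gadgetry so that $\gamma_{dR}(G_\xi)$ has a clean closed form depending only on $n$ and $m$, and verify this baseline using the earlier structural tools — in particular Theorem \ref{12} and the path/cycle values of Theorem \ref{pncn} for any attached paths, and the degree-based upper bounds of Theorem \ref{3.5} and Theorem \ref{77} to control how cheaply weight can be forced up. Next I would analyze the effect of deleting a single carefully chosen edge in each clause gadget: the intended bondage set $B$ consists of one edge per clause (or one edge for the whole formula, depending on the gadget), so $k$ is a small explicit function of $m$. The forward direction ($\xi$ satisfiable $\Rightarrow b_{dR}(G_\xi)\le k$) is the routine half: from a satisfying assignment $t$ I would name the explicit edge set $B$, exhibit that every $\gamma_{dR}$-function of $G_\xi - B$ is forced to spend extra weight (no length-$2$ path can cheaply cover the clause vertices any longer), and conclude $\gamma_{dR}(G_\xi - B) > \gamma_{dR}(G_\xi)$.

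The harder and more delicate direction is the converse: I must show that if $b_{dR}(G_\xi)\le k$ then a satisfying assignment exists. Here I would take an arbitrary bondage set $B$ with $|B|\le k$ achieving $\gamma_{dR}(G_\xi - B) > \gamma_{dR}(G_\xi)$ and argue that, because of the pinning gadgets, any such $B$ can be ``normalized'' without increasing its size so that it interacts with each clause gadget in one of a bounded number of canonical ways; from this canonical form I would read off a consistent truth value for each variable. The main obstacle I anticipate is exactly this normalization step: I must rule out ``cheating'' bondage sets that raise $\gamma_{dR}$ by attacking the variable gadgets or the pendant structures rather than by genuinely separating a clause from its satisfying literal. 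Controlling these cases requires the pendant/support machinery to be rigid enough that deleting such edges is either impossible within the budget $k$ or does not raise $\gamma_{dR}$ at all — which is where Corollary \ref{corn1} on support vertices and the exact values in Theorem \ref{p_n} and Theorem \ref{12} do the essential bookkeeping. Once the canonical structure of minimum bondage sets is established, the equivalence with satisfiability follows, and since $G_\xi$ is bipartite by construction the stated restriction is immediate.
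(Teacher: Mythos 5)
There is a genuine gap, and it sits exactly where you yourself flag the difficulty: the converse direction. Your plan is to take an arbitrary bondage set $B$, ``normalize'' it into a canonical interaction with the clause gadgets, and read a truth assignment off $B$. That program is not workable, because a bondage set does not encode a truth assignment at all --- in reductions of this type the assignment is read off a minimum double Roman dominating function, not off the deleted edges. The paper's construction makes this vivid: the witnessing bondage edge is $l_1l_2$, an edge of a separate control gadget (a $C_8$ with an apex vertex $l_9$, with $l_2,l_4$ joined to all clause vertices) that touches no literal and no clause vertex, so the intuition that a bondage set must ``genuinely separate a clause from its satisfying literal'' is false for the very construction that works. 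Moreover, classifying all size-$\le k$ edge sets that raise $\gamma_{dR}$ is precisely the kind of case analysis that these proofs are engineered to avoid; you have no mechanism to carry it out.

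The missing idea is a value-gap argument that routes everything through the exact value of $\gamma_{dR}(G)$. The paper arranges the gadgets so that (i) $\gamma_{dR}(G)\ge 6n+8$ always, with equality if and only if $\xi$ is satisfiable (the assignment being extracted from the rigid structure of any minimum DRDF attaining $6n+8$); and (ii) $\gamma_{dR}(G-e)\le 6n+9$ for \emph{every} edge $e$, by exhibiting explicit DRDFs of weight $6n+9$ in each case. With $k=1$ the equivalence is then automatic in both directions: if $\xi$ is satisfiable, the rigidity of minimum DRDFs forces $\gamma_{dR}(G-l_1l_2)>6n+8=\gamma_{dR}(G)$, so $b_{dR}(G)=1$; if $\xi$ is unsatisfiable, then $\gamma_{dR}(G)=6n+9$ by (i) and (ii), hence $\gamma_{dR}(G-e)=\gamma_{dR}(G)$ for every edge $e$ and $b_{dR}(G)\ge 2$. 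No bondage set is ever analyzed structurally; the converse is a two-line consequence of the sandwich $6n+9\le\gamma_{dR}(G)\le\gamma_{dR}(G-e)\le 6n+9$. Your proposal, as written, has the right reduction skeleton (3-SAT, bipartite variable and clause gadgets) but lacks this engine, and the normalization step you propose in its place is where the proof would break down. Note also that your hedge ``one edge per clause, so $k$ is a function of $m$'' makes matters strictly harder: the uniform upper bound (ii) is only needed for single edges precisely because $k=1$.
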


\begin{proof}
Let $U=\{u_1,u_2,\ldots,u_n\}$ and $\xi=\{D_1, D_2,\ldots, D_m\}$
be an arbitrary instance of $3$-SAT. We construct a bipartite
graph $G$ and a positive integer $k$ such that $\xi$ is
satisfiable if and only if $b_{dR}(G)\leq k$. The graph $G$ is
constructed as follows. For each $i=1,2,\ldots,n$, corresponding
to the variable $u_i\in U$, we associate a graph $H_i$ with
vertex set $V(H_i)=\{u_i, \overline{u}_i,w_i, v_i, v^\prime_i,
x_i, y_i, z_i\}$ and edge set
\[
E(H_i)=\{u_iz_i,u_iv_i,v_iw_i, \overline{u}_iz_i, \overline{u}_iv^\prime_i, v^\prime_iw_i,w_iz_i, y_iv_i, y_iv^\prime_i, y_iz_i, x_iv_i, x_iv^\prime_i\}.
\]
For each $j=1,2,\ldots,m$, corresponding to the clause $D_j=\{p_j, q_j, r_j\}\in \xi $, associate a single vertex $c_j$ and add edge set $E_j=\{c_jp_j, c_jq_j, c_jr_j\}, 1\leq j \leq m$. Next, add a cycle
$C_8=l_1l_2l_3l_4l_5l_6l_7l_8l_1$, and join $l_2$ and $l_4$  to each vertex $c_j$ with $1\leq j \leq m$. Finally add a new vertex $l_9$ and join it to both $l_1$ and $l_5$, and set $k=1$.
Figure 1 shows an example of the graph $G$ when $U=\{u_1,u_2,u_3,u_4\}$ and $\xi=\{D_1, D_2, D_3\}$, where $D_1=\{u_1,\overline{u_2},u_4\}$, $D_2=\{\overline{u_1},\overline{u_2}, u_4\}$
and $D_3=\{u_2,u_3,\overline{u_4}\}$.

\tikzstyle{vertex}=[circle, draw, inner sep=0pt, minimum size=5pt]
\newcommand{\vertex}{\node[vertex]}

\newcommand{\vertexs}{\node[vertexs]}
\[
\begin{tikzpicture}
\vertex[fill] (a1) at (0,0) [label=above right:$u_{1}$]{};
\vertex [fill](a2) at (1,0) [label=below:$z_{1}$]{};
\vertex [fill](a3) at (2,0) [label=above left:$\overline{u}_{1}$]{};
\vertex [fill](a9) at (1,2) [label=above :$y_{1}$]{};
\vertex [fill](a7) at (0,2) [label=above:$v_{1}$]{};
\vertex [fill](a5) at (2,2) [label=above:$v^{\prime}_{1}$]{};
\vertex [fill](a6) at (1,3) [label=above:$x_{1}$]{};
\vertex [fill](a4) at (1,1) [label=right:$w_{1}$]{};
\vertex [fill](b1) at (2.5,-2) [label= left:$c_{1}$]{};
\vertex[fill] (e1) at (3,0) [label=above right:$u_{2}$]{};
\vertex [fill](e2) at (4,0) [label=below right:$z_{2}$]{};
\vertex [fill](e3) at (5,0) [label=above left:$\overline{u}_{2}$]{};
\vertex [fill](e9) at (4,2) [label= above:$y_{2}$]{};
\vertex [fill](e7) at (3,2) [label=above:$v_{2}$]{};
\vertex[fill] (e5) at (5,2) [label=above:$v^{\prime}_{2}$]{};
\vertex [fill](e6) at (4,3) [label=above:$x_{2}$]{};
\vertex[fill] (f1) at (5.5,-2) [label= right:$c_{2}$]{};
\vertex [fill](e4) at (4,1) [label=right:$w_{2}$]{};
\vertex[fill] (g1) at (6,0) [label=above right:$u_{3}$]{};
\vertex [fill](g2) at (7,0) [label=below:$z_{3}$]{};
\vertex[fill] (g3) at (8,0) [label=above left:$\overline{u}_{3}$]{};
\vertex[fill] (g9) at (7,2) [label=above:$y_{3}$]{};
\vertex[fill] (g7) at (6,2) [label=above:$v_{3}$]{};
\vertex [fill](g5) at (8,2) [label=above:$v^{\prime}_{3}$]{};
\vertex [fill](g6) at (7,3) [label=above:$x_{3}$]{};
\vertex [fill](h1) at (8.5,-2) [label= right:$c_{3}$]{};
\vertex [fill](g4) at (7,1) [label=right:$w_{3}$]{};
\vertex[fill] (i1) at (9,0) [label=above right:$u_{4}$]{};
\vertex [fill](i2) at (10,0) [label=below:$z_{4}$]{};
\vertex [fill](i3) at (11,0) [label=above left:$\overline{u}_{4}$]{};
\vertex [fill](i9) at (10,2) [label=above:$y_{4}$]{};
\vertex [fill](i7) at (9,2) [label=above:$v_{4}$]{};
\vertex [fill](i5) at (11,2) [label=above:$v^{\prime}_{4}$]{};
\vertex [fill](i6) at (10,3) [label=above:$x_{4}$]{};
\vertex [fill](i4) at (10,1) [label=right:$w_{4}$]{};
\vertex[fill] (j1) at (3.5,-5) [label=left:$l_{1}$]{};
\vertex[fill] (j2) at (4.5,-4) [label=below:$l_{2}$]{};
\vertex[fill] (j3) at (5.5,-4) [label=below:$l_{3}$]{};
\vertex[fill] (j4) at (6.5,-4) [label=below:$l_{4}$]{};
\vertex[fill] (j5) at (7.5,-5) [label=right:$l_{5}$]{};
\vertex[fill] (j6) at (6.5,-6) [label=below:$l_{6}$]{};
\vertex[fill] (j7) at (5.5,-6) [label=below:$l_{7}$]{};
\vertex[fill] (j8) at (4.5,-6) [label=below:$l_{8}$]{};
\vertex[fill] (j9) at (5.5,-5) [label=below:$l_{9}$]{};
\path
(a2) edge[bend left=40] (a9)
(a1) edge (a2)
(a2) edge (a3)
(a5) edge (a6)
(a6) edge (a7)
(a5) edge (a9)
(a1) edge (a7)
(a3) edge (a5)
(a7) edge (a9)
(a4) edge (a7)
(a4) edge (a5)
(e1) edge (e2)
(e2) edge (e3)
(e5) edge (e6)
(e6) edge (e7)
(e5) edge (e9)
(e3) edge (e5)
(e7) edge (e9)
(e4) edge (e7)
(e4) edge (e5)
(e2) edge[bend left=40] (e9)
(g1) edge (g2)
(g2) edge (g3)
(g5) edge (g6)
(g6) edge (g7)
(g5) edge (g9)
(g3) edge (g5)
(g7) edge (g9)
(g7) edge (g1)
(e7) edge (e1)
(g2) edge (g4)
(a2) edge (a4)
(e2) edge (e4)
(g4) edge (g7)
(g4) edge (g5)
(g2) edge[bend left=40] (g9)
(i1) edge (i2)
(i2) edge (i3)
(i5) edge (i6)
(i6) edge (i7)
(i5) edge (i9)
(i3) edge (i5)
(i7) edge (i9)
(i7) edge (i1)
(i2) edge (i4)
(i4) edge (i7)
(i4) edge (i5)
(i2) edge[bend left=40] (i9)
(j1) edge (j2)
(j2) edge (j3)
(j3) edge (j4)
(j4) edge (j5)
(j5) edge (j6)
(j6) edge (j7)
(j7) edge (j8)
(j8) edge (j1)
(j1) edge (j9)
(j5) edge (j9)
(b1) edge (j2)
(b1) edge (j4)
(f1) edge (j2)
(f1) edge (j4)
(h1) edge (j2)
(h1) edge (j4)
(b1) edge (a1)
(b1) edge (e3)
(b1) edge (i1)
(f1) edge (a3)
(f1) edge (e3)
(f1) edge (i1)
(h1) edge (e1)
(h1) edge (g1)
(h1) edge (i3);
\end{tikzpicture}
\]

It is easy to see that $G$ is a bipartite graph, and the
construction can be accomplished in polynomial time. Let $k=1$.
We will prove that $\xi$ is satisfiable if and only if
$b_{dR}(G)=1$. We proceed with a series of claims namely Claim 1,
Claim 2, Claim 3 and Claim 4.

{\bf Claim 1.} $\gamma_{dR}(G)\geq 6n + 8$. If equality hold, then for any
$\gamma_{dR}$-function $f$ on $G$,\\
\item[$i)$] $f(H_i) = 6$, for any $i=1,\ldots ,n,$  \\
\item[$ii)$]$|\{u_i, \overline{u}_i\} \cap V_3|\leq1$, for any $i=1,\ldots ,n,$  \\
\item[$iii)$]$\{u_i, \overline{u}_i\} \cap V_2=\emptyset$, for any $i=1,\ldots ,n,$ \\
\item[$iv)$]$f(l_1) =f(l_3)=f(l_5) =f(l_7)=2$ and $\{c_j: j=1,2, \ldots, m\}\subseteq V_0$.

{\bf Proof of Claim 1.} Let $f$ be a $\gamma_{dR}$-function of $G$, and $i\in \{1,2,...,n\}$.\\
If $f(u_i)\geq 2$ and $f(\overline{u}_i)\geq 2$, then  $\sum_{v\in N[x_i]} f(v)\geq 2$ and so $\sum_{v\in V(H_i)} f(v)\geq 6$. If $f(u_i)=0$ or $f(\overline{u}_i)=0$, then by considering $H_i-u_i-\overline{u}_i$ and $H_i-u_i$ or $H_i-\overline{u}_i$ and Theorem \ref{12}, we obtain that $\sum_{v\in V(H_i)} f(v)\geq 6$.
If $f(u_i)=2$ and $f(\overline{u}_i)=0$ or $f(u_i)=3$ and $f(\overline{u}_i)=0$, then similarly we observe that $\sum_{v\in V(H_i)} f(v)\geq 6$. On the other hand $\sum_{i=1}^9 f(l_i)\geq 8$.
Consequently, $\gamma_{dR}(G)\geq 6n + 8$.
Assume that $\gamma_{dR}(G)=6n + 8$. Then $\sum_{i=1}^9 f(l_i)= 8$ and $f(H_i)=\sum_{v\in V(H_i)} f(v)= 6$ for $i=1,2,...,n$. Suppose that there is an integer $j\in \{1,2...,n\}$ such that
$f(u_j)=f(\overline{u}_j)=3$. Since $\sum_{v\in N[x_j]}f(v)\geq 2$, and $\sum_{v\in N[w_j]}f(v)\geq 2$, we find that $f(H_j)> 6$, a contradiction. Thus $|\{u_i, \overline{u}_i\} \cap V_3|\leq 1$, for
any $i=1,\ldots ,n$. Parts $(iii)$ and $(iv)$ are proved similarly. Thus the proof of Claim 1 is complete. $\diamondsuit$

{\bf Claim 2.} $\gamma_{dR}(G) = 6n + 8$ if and only if $\xi$ is satisfiable.

{\bf Proof of Claim 2.} Assume that $\gamma_{dR}(G) = 6n + 8$ and
let $f$ be a $\gamma_{dR}(G)$-function. By Claim 1, at most one
of $f(u_i)$ and $f(\overline{u}_i)$ is equal to $3$ for each
$i=1,2,...,n$. Define a mapping $t : U \rightarrow \{T, F\}$ by
\begin{center}
$$ t(u_i)=
\begin{cases}
T &\text{if $f(u_i)=3$ or $f(u_i)\neq 3$ and $f(\overline{u}_i)\neq 3$},\\
F &\text{if $f(\overline{u}_i)=3$}.
\end{cases}$$
\end{center}

We now show that $t$ is a satisfying truth assignment for $\xi$.
It is sufficient to show that every clause in $\xi$ is satisfied
by $t$. To this end, we arbitrarily choose a clause $D_j\in \xi$
with $1 \leq j \leq m$. By Claim 1, $f(c_j)=f(l_2)=f(l_4)=0$.
Thus there exists some $i$ with $1\leq i \leq n$ such that
$f(u_i) = 3$ or $f(\overline{u}_i) = 3$, where $c_j$ is adjacent
to $u_i$ or $\overline{u}_i$. Assume that $c_j$ is adjacent to
$u_i$, where $f(u_i) = 3$. Since $u_i$ is adjacent to $c_j$ in
$G$, the literal $u_i$ is in the clause $D_j$ by the construction
of $G$. Since $f(u_i) = 3$, it follows that $t(u_i) = T$ which
implies that the clause $D_j$ is satisfied by $t$. Next assume
that $c_j$ is adjacent to $\overline{u}_i$, where
$f(\overline{u}_i) = 3$. Since $\overline{u}_i$ is adjacent to
$c_j$ in $G$, the literal $\overline{u}_i$ is in the clause
$D_j$. Since $f(\overline{u}_i) = 3$, it follows that $t(u_i) =
F$. Thus, $t$ assigns $\overline{u}_i$ the truth value $T$, that
is, $t$ satisfies the clause $D_j$. Since $j$ is chosen
arbitrarily, thus $t$ satisfies all the clauses in $\xi$.
Consequently, $\xi$ is satisfiable.

Conversely, assume that $\xi$ is satisfiable, and let $t : U
\rightarrow \{T, F\}$ be a satisfying truth assignment for $\xi$.
Create a function $f$ on $V (G)$ as follows: if $t(u_i) = T$,
then let $f(u_i) = f(v^\prime)=3$, and if $t(u_i) = F$, then let
$f(\overline{u}_i) = f(v_i) = 3$. Let $f(l_1) =f(l_3)=f(l_5)
=f(l_7)=2$. Clearly, $f(V(G)) = 6n + 8$. Since $t$ is a satisfying
truth assignment for $\xi$, at least one of literals in $D_j$ is
true under the assignment $t$, for each $j = 1, 2, \ldots ,m$. It
follows that the corresponding vertex $c_j$ in $G$ is adjacent to
at least one vertex $w$ with $f(w) = 3$, since $c_j$ is adjacent
to each literal in $D_j$ by the construction of $G$. Thus $f$ is
a DRDF of $G$, and so $\gamma_{dR}(G)\leq f(G)= 6n + 8$. Since by
Claim 1, $\gamma_{dR}(G)\geq 6n + 8$, we conclude that
$\gamma_{dR}(G)= 6n + 8$. Thus the proof of Claim 2 is complete.
$\diamondsuit$

{\bf Claim 3.} $\gamma_{dR}(G-e)\leq 6n + 9$ for any $e \in E(G)$.

{\bf Proof of Claim 3.} For any edge $e \in E(G)$, it is
sufficient to construct a DRDF $f$ on $G-e$ with weight $4n+9$.
We first assume that $e \in
\{l_1l_2,l_1l_8,l_1l_9,l_3l_4,l_6l_7\}$ or $e=c_jl_4$, $c_ju_i$
or $e = c_j \overline{u}_i$, for some $j = 1, 2, \ldots ,m$, and
$i=1, 2, \ldots , n$. We define a function $f$ by $f(l_2) =
f(l_5) = f(l_8)=3 $, $f(u_i) = f(v^\prime_i) = 3$ for each $i =
1, 2, \ldots , n$, and $f(v)=0$ otherwise. Then $f$ is a DRDF of
$G-e$ with $f(G-e) = 6n + 9$. If  $e \in
\{l_5l_4,l_5l_6,l_5l_9,l_2l_3,l_7l_8\}$, then we define $f$ by
$f(l_1) = f(l_4) = f(l_6)=3 $, $f(u_i) = f(v^\prime_i) = 3$ for
each $i = 1, 2, \ldots , n$, and $f(v)=0$ otherwise. Then $f$ is
a DRDF of $G-e$ with $f(G-e) = 6n + 9$. If $e$ is not incident
with $u_i$ or $v^\prime_i$ for each $i$, then we define $f$
$f(l_1) = f(l_4) = f(l_6)=3 $ and $f(u_i) = f(v^\prime_i) = 3$,
and $f(v)=0$ otherwise. If $e$ is not incident with
$\overline{u}_i$ or $v_i$, then we define $f$ by $f(l_1) = f(l_4)
= f(l_6)=3 $, $f(\overline{u}_i) = f(v_i) = 3$, and $f(v)=0$
otherwise. If $e = u_iv_i$ or $\overline{u}_iv^\prime_i$, for
some $i$, then we define $f$ by $f(l_1) = f(l_4) = f(l_6)=3 $,
$f(x_i) = f(z_i) = 3$ for each $i = 1, 2, \ldots , n$, and
$f(v)=0$ otherwise. Then $f$ is a DRDF of $G-e$ with $f(G-e) = 6n
+ 9$ and thus $\gamma_{dR}(G-e)\leq 6n + 9$. $\diamondsuit$

{\bf Claim 4.}
$\gamma_{dR}(G)= 6n + 8$ if and only if $b_{dR}(G) = 1$.

{\bf Proof of Claim 4.}
Assume $\gamma_{dR}(G)= 6n + 8$ and consider the edge $e = l_1l_2$. Suppose
$\gamma_{dR}(G)= \gamma_{dR}(G-e)$. Let $f^\prime$ be a
$\gamma_{dR}$-function of $G - e$. It is clear that $f^\prime$ is also
a $\gamma_{dR}$-function on $G$. By Claim 1, we have $f^\prime(c_j) = 0$ for each $j = 1, 2, \ldots ,m$
and $f(l_2) =f(l_4)=f(l_6) =f(l_8)=f(l_9)=0$. But then $f^\prime(N[l_2]) =2$, a contradiction. Hence,
$\gamma_{dR}(G)< \gamma_{dR}(G-e)$, and so $b_{dR}(G) = 1$.\\
Conversely, assume that $b_{dR}(G) = 1$. By Claim 1, we have
$\gamma_{dR}(G)\geq 6n + 8$. Let $e^\prime$ be an edge such that
$\gamma_{dR}(G)< \gamma_{dR}(G-e^\prime)$. By Claim 3, we have
$\gamma_{dR}(G-e^\prime)\leq 6n+8$. Thus, $6n+8\leq
\gamma_{dR}(G)< \gamma_{dR}(G-e^\prime)\leq 6n+9$, which yields
$\gamma_{dR}(G)= 6n + 8$. $\diamondsuit$

By Claims 2 and 4, $b_{dR}(G) = 1$ if and only if there is a truth
assignment for $U$ that satisfies all clauses in $\xi$. Since the
construction of the double Roman bondage number instance is
straightforward from a $3$-satisfiability instance, the size of
the double Roman bondage number instance is bounded above by a
polynomial function of the size of $3$-satisfiability instance. It
follows that this is a polynomial reduction and the proof is
complete.
\end{proof}

\end{document}